\newtheorem{thm}{Theorem}[section]
\newtheorem{coro}[thm]{Corollary}%[section]
\newtheorem{lemma}[thm]{Lemma}%[section]
\newtheorem{rem}[thm]{Remark}%[section]
\newcommand*{\plim}[1][]{%
   \if\relax\detokenize{#1}\relax
      \def\next{\qopname\relax m{lim}}%
   \else
      \def\next{\qopname\newmcodes@ m{#1-lim}}%
   \fi
   \next
}
\newcommand*{\psum}[1][]{%
   \DOTSB
   \if\relax\detokenize{#1}\relax\else
      \operatorname{#1-}\mkern-\thinmuskip
   \fi
   \sum@\slimits@
}
\DeclareMathOperator\supp{supp}
\newcommand{\N}{\mathbb{N}}             % INTEGER
\newcommand{\C}{\mathbb{C}}             % COMPLEX
\newcommand{\e}{\epsilon}
\newcommand{\half}{\frac{1}{2}}
\newcommand{\Section}[1]{\section{#1} \setcounter{equation}{0}}
\begin{document}

\title{{Local H\"older Stability in the Inverse Steklov and Calder\'on  Problems for Radial Schr\"odinger operators and Quantified Resonances}}
% and Regular Warping Functions on the Unit Ball}
\author{Thierry Daud\'e \footnote{Research supported by the French National Research Projects AARG, No. ANR-12-BS01-012-01, and Iproblems, No. ANR-13-JS01-0006} $^{\,1}$, Niky Kamran \footnote{Research supported by NSERC grant RGPIN 105490-2018} $^{\,2}$ and Fran{\c{c}}ois Nicoleau \footnote{Research supported by the French GDR Dynqua} $^{\,3}$\\[12pt]
 $^1$  \small Laboratoire de Math\'ematiques de Besan{\c{c}}on, UMR CNRS 6623, Universit\'e de Franche-Comt\'e, \\
 \small 25030, Besan{\c{c}}on, France. \\
\small Email: thierry.daude@univ-fcomte.fr \\
$^2$ \small Department of Mathematics and Statistics, McGill University,\\ \small  Montreal, QC, H3A 0B9, Canada. \\
\small Email: niky.kamran@mcgill.ca \\
$^3$  \small  Laboratoire de Math\'ematiques Jean Leray, UMR CNRS 6629, \\ \small 2 Rue de la Houssini\`ere BP 92208, F-44322 Nantes Cedex 03. \\
\small Email: francois.nicoleau@univ-nantes.fr }

%%% TITLE %%%

%%% DATE %%%

\date{\today}

%%% TITLE %%%

\maketitle

%%% ABSTRACT %%%

\begin{abstract}
We obtain H\"older stability estimates for the inverse Steklov and Calder\'on problems for Schr\"odinger operators corresponding to a special class of $L^2$ radial potentials on the unit ball. These results provide an improvement on earlier logarithmic stability estimates obtained in \cite{DKN5} in the case of the the Schr\"odinger operators related to deformations of the closed Euclidean unit ball. The main tools involve: i) A formula relating the difference of the Steklov spectra of the Schr\"odinger operators associated to the original and perturbed potential to the Laplace transform of the difference of the corresponding amplitude functions introduced by Simon \cite{Si1} in his representation formula for the Weyl-Titchmarsh function, and ii) A key moment stability estimate due to Still \cite{St}. It is noteworthy that with respect to the original Schr\"odinger operator, the type of perturbation being considered for the amplitude function amounts to the introduction of a finite number of negative eigenvalues and of a countable set of negative resonances which are quantified explicitly in terms of the eigenvalues of the Laplace-Beltrami operator on the boundary sphere. 

%%% KEYWORDS %%%

\vspace{0.5cm}

\noindent \textit{Keywords}. Inverse Steklov problem, Steklov spectrum,  Weyl-Titchmarsh functions, moment problems, H\"older stability.

%%% SUBJECTS CLASSIFICATION %%%%

\noindent \textit{2010 Mathematics Subject Classification}. Primaries 81U40, 35P25; Secondary 58J50.

\end{abstract}

\tableofcontents
%\newpage

%%%%%%%%%%%%%%%%%%%%%%%%%%%%%%%%%%%%%%%%%%%%% INTRODUCTION %%%%%%%%%%%%%%%%%%%%%%%%%%%%%%%%%%%%%%%%%%%%%%%%%%%%%%%%%%%%%%%%%%%%%
\Section{Introduction}
In a recent paper, \cite{DKN5}, we have obtained a set of logarithmic stability estimates in the inverse Steklov problem for the Laplace-Beltrami operator on a class of warped product Riemannian manifolds defined on a $d$-dimensional closed ball. These manifolds can be thought of as deformations of the closed Euclidean $d$-ball in which the deformation is parametrized by the choice of radial warping function. The deformations considered in \cite{DKN5} include both the regular and singular cases of warped product metrics \cite{Pe}. 
\vskip .1in
\noindent The approach taken in \cite{DKN5} was based on expressing the warped product metric in a coordinate system in which the metric takes the form of a conformal rescaling of the flat Euclidean metric. This enabled us by using the transformation law of the Laplace-Beltrami operator under conformal changes of metric to reformulate the inverse Steklov problem for the Laplace-Beltrami operator on the original deformed closed ball as the inverse Steklov problem for a Schr\"odinger operator on the Euclidean ball, with a potential expressed in terms of the warping function of the original deformed ball and its derivatives.  The logarithmic stability estimates that we obtained were thus of the nature of the estimates obtained by Alessandrini \cite{Ale} and Novikov \cite{Nov}. 
\vskip .1in
\noindent Our goal in the present paper is to improve the logarithmic stability results of \cite{DKN5} in a significant way by obtaining instead a set of \emph{H\"older} stability estimates and by highlighting {\it{explicitly}} the role played in these by resonances. 
As a byproduct, we also obtain new local H{\"o}lder stability estimates for the Calder\'on problem on the unit ball for a set of admissible radial potentials. In the approach followed in the present paper, instead of starting from a deformed closed $d$-ball, we reverse the initial step taken in \cite{DKN5} and start with a Schr\"odinger operator on the Euclidean $d$-ball, endowed with a radial potential. One reason for doing so is that the set of warped product metrics for which H\"older stability estimates can be obtained for the inverse Steklov problem will be significantly more restricted than the ones for which logarithmic stability estimates were obtained in \cite{DKN5}. Thus by considering Schr\"odinger operators, we are broadening the range of inverse Steklov problems to which our H{\"o}lder stability results will apply. We emphasize nevertheless that the class of potentials for which we are able to establish H{\"o}lder stability for the inverse Steklov problem is still rather special and that any extension of our results to more general potentials may require stronger techniques than the ones we are using the present paper. We now proceed to describe our main results.

% ----------------------   Enonc� des r�sultats principaux -----------------------------

\vskip .1in
\noindent We let $M$ denote the manifold with boundary given by the $d\geq 3$-dimensional closed Euclidean unit ball centered at the origin in ${\mathbb{R}}^d$, with boundary given by the unit sphere $S^{d-1}$. We shall often work in hyperspherical coordinates, which are not well defined at the origin, so that we shall occasionally commit a slight abuse of notation by writing
\begin{equation} \label{manifoldI}
	M = (0,1] \times S^{d-1}\,,
\end{equation} 	
We consider the Dirichlet problem for the Schr\"odinger operator with a potential $q$, given by
\begin{equation} \label{SchrodI}
	\left\{ \begin{array}{cc}
		-\triangle u + q\,u= 0, & \textrm{on} \ M\,, \\
		u = \psi \in H^{\half}(\partial M), & \textrm{on} \ \partial M\,,
	\end{array} \right.
\end{equation}

\vskip .1in
\noindent 
When $q\in L^{\infty}(M)$ and $\lambda=0$ is not a Dirichlet eigenvalue of the above Schr\"odinger operator, the Dirichlet problem (\ref{SchrodI}) has a unique solution $u\in H^{1}(M)$.  The Dirichlet-to-Neumann (DN) map $\Lambda_q$ is then (formally) defined as an operator from $H^{1/2}(\partial M)$ to $H^{-1/2}(\partial M)$ by
\begin{equation} \label{DNI}
	\Lambda_{q} \psi = \left( \partial_\nu u \right)_{|\partial M}\,,
\end{equation}
where $u$ is the unique solution of (\ref{SchrodI}) and $\left( \partial_\nu u \right)_{|\partial M}$ is the normal derivative of $u$ with respect to the outer unit normal vector $\nu$ on $\partial M$.

\vskip .1in
\noindent 
The DN map thus defined is a self-adjoint operator on $L^2(\partial{M}, dS_g)$, where $dS_g$ denotes the metric induced by the Euclidean metric on the boundary sphere $\partial M=S^{d-1}$. Its spectrum (the so-called {\it  Steklov spectrum}) is discrete  and accumulates at infinity. We shall denote the Steklov eigenvalues (counted with multiplicity) by
\begin{equation} \label{SteklovI}
	0 = \sigma_0 < \sigma_1 \leq \sigma_2 \leq \dots \leq \sigma_k \to \infty\,.
\end{equation}
The Steklov spectrum will be the central object of study in this paper.

\vskip .1in
\noindent 
For the remainder of this paper, we shall assume that the potential is radial and we write $q=q(r)$, where $r$ denotes the Euclidean distance to the origin. It will be convenient to replace the radial coordinate $r\in (0,1]$ by a new radial coordinate $x\in [0,\infty)$ defined by $x=-\log r$, in which case the boundary of $M$ now corresponds to $x=0$. The Euclidean metric then takes the form 

\[
g=f(x)^{4}(dx^2+d\Omega^2)\,,
\]
where $f(x)= \exp(-x/2)$ and $d\Omega^2$ denotes the round metric on the unit sphere $S^{d-1}$. The Dirichlet problem (\ref{SchrodI}) gets transformed into
\begin{equation} \label{SchrodnewI}
	\left\{ \begin{array}{cc}
		[-\partial_x^2 - \triangle_{S} + Q(x)] v = -\frac{(d-2)^2}{4} v, & \textrm{on} \ M\,, \\
		v = f^{d-2} \psi, & \textrm{on} \ \partial M\,,
	\end{array} \right.
\end{equation}
where $\triangle_{S}$ denotes the Laplacian on the boundary sphere $S^{d-1}$, where $Q(x):=e^{-2x}q(e^{-x})$ and where $v=f^{d-2}u$. In other words, if $(r,\omega)$ denote hyperspherical coordinates in ${\bf{R}}^d$, then we have $v(x,\omega)=e^{-({{d-2}\over{2}})x}u(e^{-x},\omega)$.

\vskip .1in
\noindent 
Thanks to  the spherical symmetry of the potential $Q$, we can use separation of variables and the Fourier decomposition of $L^{2}(S^{d-1})$ to reduce (\ref{SchrodnewI}) to an infinite sequence of radial ordinary differential equations. In doing so, we shall denote by $\{Y_{k},\, k\geq 0\}$ an orthonormal Hilbert basis of $L^{2}(S^{d-1})$ consisting of eigenfunctions of $\triangle_{S}$, 
\begin{equation}\label{sphericalharm}
-\triangle_{S}Y_{k}=\alpha_{k}Y_{k}\,,\quad \alpha_{k}=k(k+d-2)\,.
\end{equation}
Note that we are committing a slight abuse of notation by omitting from the $Y_{k}$ the $d-2$ additional indices whose range reflects the multiplicities of the eigenvalues $\alpha_{k}$ of the Laplacian $-\triangle_{S}$ on $S^d$. In other words, our notation is a compact expression of the fact that any function $f \in L^{2}(S^{d-1})$ has a Fourier expansion in the $L^{2}$ sense given by 
\[
f=\sum_{k=0}^{\infty}f_k\,,
\]
where $f_k$ denotes the image of $f$ under the orthogonal projection from $L^{2}(S^{d-1})$ onto the restriction to  $S^{d-1}$ of the space of homogeneous harmonic polynomials of degree $k$ in $\mathbb{R}^d$.
This abuse of notation is carried further by writing the Fourier decomposition of v as
\begin{equation}\label{sepansatzI}
	v=\sum_{k=0}^{\infty} v_{k} (x) \  Y_{k}\,.
\end{equation}
giving rise to an infinite sequence of ordinary differential equations on  $(0,\infty)$ given by
\begin{equation}
	-v_{k}''+Qv_{k}=-(\alpha_k+\frac{(d-2)^2}{4})v_{k}=-\kappa_k^{2}v_{k}\,,
\end{equation}
where 
\[
\kappa_{k}:=k+\frac{d-2}{2}\,,\quad k\geq0\,.
\]

\vskip .1in
\noindent
As in \cite{DKN5},  we introduce the Weyl-Titchmarsh (WT) function $M(z)$ associated to the Sturm-Liouville operator $L$ on $(0, +\infty)$ given by
\begin{equation}\label{ComplexifiedSL}
	L=-\frac{d^2}{dx^2}+Q\,.
\end{equation}
This function will play a central role in our subsequent analysis of the stability problem of the Steklov spectrum for our Schr\"odinger operator \footnote{Since we are concerned with the Dirichlet-to-Neumann map, we are putting a Dirichlet boundary condition at $x=0$, which represents the boundary sphere ${\partial M}=S^{d-1}$ in the new radial coordinate $x=-\log r$. Had we put a Neumann condition at $x=0$, the corresponding Weyl-Titchmarsh function would have then corresponded to the Neumann-to-Dirichlet map, in which case the multiplication operators defined below in (\ref{multop}) would have involved the Weyl-Titchmarsh function associated to a Neumann boundary condition at $x=0$.}. We  assume in this paper that 
\begin{equation}\label{QL2I}
	Q\in L^{2}(0,\infty)\,.
\end{equation}
Under this assumption, it is well-known that $L$ is of limit point-type at infinity, which means that for all $z\in \mathbb{C}\setminus [-\beta, \infty)$ with $\beta >>1$, there exists, up to a zon-zero multiplicative constant, a unique solution $u(x,z)$ of 
\begin{equation}\label{ComplexifiedSLI}
	-u''+Qu=zu\, \quad z\in \mathbb{C}\,,
\end{equation} 
which is $L^{2}$ at $\infty$. The Weyl-Titchmarsh function $M(z)$ is then defined by
\begin{equation}\label{defWTI}
	M(z):=\frac{u'(0,z)}{u(0,z)}\, \ {\rm{for \ all  }} \ z\in \mathbb{C}\setminus [-\beta, \infty)\,.
\end{equation}

\noindent
Of course, the square-integrability hypothesis (\ref{QL2I}) we made on the potential $Q$ does not necessarily imply that the initial  potential $q\in L^{\infty}(M)$. Thus, the definition we gave for the DN map in (\ref{DNI}) is not directly applicable in this $L^2$ setting. We overcome this difficulty by exploiting the separation of variables and follow the procedure used in Section 2 of \cite{DKN5} to define the DN map in the present setting; namely we expand the boundary data $\psi$ in the Hilbert basis $\{Y_k,k\geq 0\}$ of $L^{2}(S^{d-1})$ as 
\[
\psi=\sum_{k=0}^{\infty}\psi_{k}Y_{k}\,,
\]
and define the DN map $\Lambda_{q}$  as a sum of operators $\Lambda_{q}^{k}$ by  
\begin{equation} 
	\Lambda_q \psi = \sum_{k =0}^\infty  (\Lambda_{q}^{k} \psi_k) Y_k\,.
\end{equation}
In other words, the operators $\Lambda^{k}_{q}$ are the restrictions of the Dirichlet-to-Neumann map $\Lambda_{q}$ to the eigenspaces corresponding to the eigenvalues $\sigma_k$ of $\Lambda_{q}$. Their expression is computed in the form of multiplication operators from the separation of variables for the Schr\"odinger operator and the Fourier decomposition of $L^{2}(S^{d-1})$ into orthogonal eigenspaces of the Laplacian $-\triangle_{S}$ on $S^{d-1}$:
\begin{equation} \label{DiagDNI}
	\Lambda^{k}_{q} \psi_k = -\frac{(d-2)}{2}v_{k}(0)-v'_{k}(0)\,,
\end{equation}
where 
\[
-v_{k}''+Qv_{k}=-\kappa_k^{2}v_{k}\,\quad v_{k}(0)=\psi_{k}, \,\  v_{k} \in L^2 \ \mbox{at} \, +\infty \,.
\]
As was proved in Section 2 of \cite{DKN5}, the operators $\Lambda^{k}_{q},\,k\geq 0$ can be further simplified by making use of the Weyl-Titchmarsh function $M$ evaluated at the points $-\kappa_k^2$,
\begin{equation}\label{multop}
	\Lambda^k_q \psi_k = \left( -\frac{(d-2) }{2} - M(-\kappa_k^2) \right) \psi_k\,,
\end{equation}
thus providing the expression of the Steklov spectrum $\{\sigma_{k},\, k\geq 0\}$ in terms of the Weyl-Titchmarsh function $M$ as  
\begin{equation}\label{StekSpecI}
	\sigma_k = -\frac{(d-2) }{4} - M(-\kappa_k^2)\,.
\end{equation}

\vskip .1in
\noindent
There is an important representation formula first obtained in \cite{Si1} for the Weyl-Titchmarsh function in terms of a the Laplace transform of a unique \emph{amplitude function} $A$, under the hypothesis that $Q\in L^{1}(0,\infty)$: 
\begin{equation}\label{SimonRepI} 
	M(-\kappa^2) = -\kappa - \int_0^\infty A(\alpha) e^{-2\kappa\alpha} d\alpha\,,\quad \forall \kappa > \frac{1}{2}||Q||_1\,.
\end{equation}
We shall use below a slightly refined version of this formula which applies in our $L^2$-setting and which will serve as the starting point of our formulation of the stability problem for the Steklov spectrum.

\vskip .1in
\noindent
Let us now explain what we mean precisely by {\it local stability estimates}. The starting point is a {\it fixed potential} $Q \in L^2 (0, \infty)$ which we perturb through the addition of a certain exponential series to its corresponding amplitude $A$. The set of admissible exponential series parametrizing the perturbations will be shown to lie in a certain infinite-dimensional space. Then using powerful results of Killip -Simon \cite{KS} we will show that the perturbed potential $\tilde{Q} \in L^2 (0,\infty)$. We then denote by $\{\tilde{\sigma}_k, k\geq 0\}$ the Steklov spectrum associated to ${\tilde{Q}}$. Since $ M(-\kappa_k^2) = - \kappa_k +o(1)$, as $k \to \infty$, (see \cite{Si1}, Corollary 4.2), we see that the sequence $\{\sigma_k -\tilde{\sigma}_k, k \geq 0\} \in \ell^{\infty}(\N)$. In the rest of the paper, we assume that the difference between their corresponding Steklov spectra is uniformly bounded in absolute value by a small error $\epsilon>0$; in other words, we set 
\begin{equation}\label{StabilityStatementI}
	||\sigma_k-{\tilde \sigma}_{k}||_{\ell^{\infty}(\N)}=:\epsilon .
\end{equation}
Our main goal is to estimate the  difference $Q-{\tilde{Q}}$ of the potentials. Again, our result is local: for any fixed parameter $T>0$,  we get  stability estimates in the space $L^2(0,T)$, meaning that
\begin{equation}
	|| \tilde{Q}- Q ||_{L^2 (0,T)} \leq C_T \ g(\epsilon)\,,
\end{equation}
where $g(\epsilon) \to 0$ when $\epsilon \to 0$, and $C_T$ is a constant depending only on $T$.

\vskip .2in
\noindent
Now, we can state our main result in this paper :

\begin{thm}\label{Mainest}
	Let $Q\in L^{2}(0,\infty)$ be a square-integrable potential with amplitude function $A$, and let   $\delta \geq 3-d$ be  any fixed parameter. Set $\mu_k := \lambda_k +\delta$ where $\lambda_k = 2k+d-3 +\delta$ and let $\{c_{k},\,k\geq 0\}$ be a sequence of real numbers such that 
	\begin{itemize}
		\item{i)} $c_k \leq 0$ for all $k \geq 0$.
		\item{ii)} The power series $\sum_{k\geq 0}c_{k} t^{\lambda_k}$ has a radius of convergence $R>1$.
	\end{itemize}
	Then the function ${\tilde{A}}$ defined by 
	
		\begin{equation}\label{Defperturbation}
			{\tilde{A}}(\alpha)=A(\alpha)+\sum_{k\geq 0}c_{k}e^{-\mu_{k}\alpha}\,,\quad \alpha >0\,,
		\end{equation}
	is the amplitude function of a potential ${\tilde{Q}}\in L^{2}(0,\infty)$. \par
	\vspace{0.2cm}\noindent
	Moreover, for any fixed $T>0$, there exists a positive constant $C_T$ such that
	
	\begin{equation}
		|| \tilde{Q}- Q ||_{L^2 (0,T)} \leq C_T  \ ||\sigma_k-{\tilde \sigma}_{k}||_{\ell^{\infty}(\N)}^{\theta}\,,
	\end{equation}
where the H\"older exponent $\theta \in (0, \half]$ is independent of $T$ and is given by 
	\begin{equation}
	\theta = \half  \min (1,  \frac{\log R}{\log (\frac{9M_0}{2})} ),\quad M_0=\max\{2,4(d-3+\delta)+1\}\,.
	\end{equation}
	
\end{thm}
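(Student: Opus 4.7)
The proof splits naturally into three steps.

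\emph{Step 1: Well-posedness of the perturbed potential.} I first have to verify that $\tilde A$ defined by (\ref{Defperturbation}) is indeed the amplitude function of some $\tilde Q \in L^2(0,\infty)$. Since $Q \in L^2(0,\infty)$ by hypothesis, this reduces to showing that the perturbation $\sum_{k\geq 0} c_k e^{-\mu_k\alpha}$ preserves membership in the Killip--Simon class. The sign hypothesis $c_k \leq 0$ has a clear spectral meaning: each term corresponds to adjoining to $L = -d^2/dx^2 + Q$ a negative eigenvalue or negative resonance located at $-\mu_k^2/4$. The assumption that $\sum c_k t^{\lambda_k}$ has radius $R>1$ then forces the coefficients $|c_k|$ to decay geometrically, so that the Killip--Simon sum rule remains finite, yielding $\tilde Q \in L^2(0,\infty)$.

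\emph{Step 2: Reduction to a Hausdorff moment problem and Still's estimate.} Using (\ref{StekSpecI}) and Simon's representation (\ref{SimonRepI}), I would compute
\[
\sigma_k - \tilde\sigma_k \;=\; M_{\tilde Q}(-\kappa_k^2) - M_Q(-\kappa_k^2) \;=\; -\int_0^\infty (\tilde A - A)(\alpha)\, e^{-2\kappa_k \alpha}\, d\alpha.
\]
Substituting $\tilde A - A = e^{-\delta\alpha}\sum_j c_j e^{-\lambda_j\alpha}$ and changing variable via $t=e^{-\alpha}$ yields, using the identity $2\kappa_k - 1 + \delta = \lambda_k$,
\[
\sigma_k - \tilde\sigma_k \;=\; -\int_0^1 t^{\lambda_k}\, G(t)\, dt, \qquad G(t) := \sum_{j\geq 0} c_j\, t^{\lambda_j},
\]
so that the assumption $\|\sigma_k - \tilde\sigma_k\|_{\ell^{\infty}(\N)} = \epsilon$ asserts precisely that the Müntz--Hausdorff moments of $G$ along the exponents $\lambda_k = 2k+d-3+\delta$ are uniformly $\epsilon$-small. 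Since $G$ is holomorphic on $|t|<R$ by hypothesis, I would then invoke Still's quantitative moment stability theorem \cite{St}, which for an arithmetic Müntz system of this type delivers a Hölder bound of the form $\|G\|_{L^2(0,1)} \leq C\,\epsilon^{2\theta}$, with $\theta = \tfrac{1}{2}\min(1,\log R/\log(9M_0/2))$. The constant $M_0$ enters through the density parameters of $\{t^{\lambda_k}\}$ on $(0,1)$ (hence its dimension-dependence), while $\log R$ enters through the radius of analyticity.

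\emph{Step 3: Passing from the amplitude back to the potential.} The bound on $G$ obtained in Step 2 translates, via the inverse change of variable and Parseval-type estimates for exponential series with geometric tails, into an $L^2(0,T)$ bound on $\tilde A - A$ for any fixed $T>0$. To finally transfer this into an $L^2(0,T)$ bound on $\tilde Q - Q$, I would exploit the Volterra-type integral relation between a potential and its amplitude function from \cite{Si1}, which on compacta is a locally bi-Lipschitz map on $L^2$; composing with the estimate from Step 2 produces the claimed Hölder bound, with constant $C_T$ controlled by $T$ through the Volterra kernel.

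The main obstacle is Step 2: applying Still's theorem so as to produce exactly the exponent $\theta$ of the statement requires a careful tracking of the Müntz density constants for $\{t^{\lambda_k}\}$, which is where both $R$ and the dimension-dependent quantity $M_0$ come into play. Step 1, while conceptually delicate, reduces to a check of the Killip--Simon sum rule under a geometrically convergent perturbation, and Step 3, although classical, depends on maintaining uniform control of the Volterra correction on each fixed interval $[0,T]$.
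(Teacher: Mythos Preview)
Your three-step architecture is exactly that of the paper, and Steps~1 and the moment-problem reduction at the start of Step~2 are correctly identified (including the change of variables $t=e^{-\alpha}$ and the identification $2\kappa_k-1+\delta=\lambda_k$). Two points, however, are treated too loosely.

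\emph{Step 2.} Still's theorem in \cite{St} is not a ``moment stability theorem''; it is an approximation result asserting that for an analytic M\"untz series with radius $R$ one has $E_\infty(G,\Lambda_n)\leq C R^{-\lambda_{n+1}}$. This alone does not bound $\|G\|_{L^2(0,1)}$ in terms of the moments. The paper obtains the H\"older bound by splitting $\|G\|_2^2=\|\pi_n G\|_2^2+\|G-\pi_n G\|_2^2$, where $\pi_n$ is the orthogonal projection onto $\mathrm{span}\{t^{\lambda_0},\dots,t^{\lambda_n}\}$; the moment hypothesis controls $\|\pi_n G\|_2^2$ via explicit Gram--Schmidt coefficients (this is where the constant $M_0$ appears, through estimate (4.72) of \cite{DKN5}), while Still's theorem controls the tail. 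One then optimizes over $n=n(\epsilon)$ to extract the exponent $\theta$. Your sketch skips this decomposition, which is precisely where $M_0$ and $R$ combine to produce the stated exponent.

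\emph{Step 3.} The paper does not use Simon's Volterra relation $A\sim Q+\text{corrections}$ from \cite{Si1}; it uses instead the \emph{local Gel'fand--Levitan equation} $(I+K)V=d$ on $[x,T]$ from \cite{ABM}, with kernel $K$ built from $p(t)=-\tfrac12\int_0^{t/2}A(\alpha)\,d\alpha$ and the reconstruction formula $Q(T-x)=-2\frac{d}{dx}V(x,x)$. The estimate on $\tilde A-A$ from Step~2 yields $\|p-\tilde p\|_{C^0(0,2T)}\leq C_T f(\epsilon)$, hence $\|K-\tilde K\|\leq C_T f(\epsilon)$ by Schur; a resolvent identity then gives $\|\tilde V-V\|_{L^2(x,T)}\leq C_T f(\epsilon)$, and differentiating the integral equation in $x$ yields the same for $\partial_x V$. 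Your appeal to an abstract ``locally bi-Lipschitz'' property of the Simon map $A\mapsto Q$ would need exactly this kind of argument to be made rigorous, and the Gel'fand--Levitan route is how the paper does it.
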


\vskip .2in
\noindent
Note that when the radius of convergence $R \geq \frac{9M_0}{2}$, we can take as H\"older exponent $\theta= \half$. Note also that when the initial potential is the trivial potential $Q=0$, the perturbed potentials $\tilde{Q}$ can be seen as a generalization of the so-called Bargmann potentials (see section 5 for details). 

\vskip .1in
\noindent
Finally, it is noteworthy that with respect to the original Schr\"odinger operator, the type of perturbation being considered for the amplitude function $A$ amounts to the introduction of a finite number $N$ of negative eigenvalues $- \frac{\mu_k^2}{4}$ for $k=1, ..., N$, (corresponding to the case where $\mu_k$ is negative), and  of a countable set of  real resonances $- \frac{|\mu_k|}{2}$ which are equally spaced on the negative real axis (for $k$ greater than some $k_0$). These resonances are quantified explicitly in terms of the parameter $\delta$ and the eigenvalues of the Laplace Beltrami operator $\Delta_S$ on the boundary sphere.

\vspace{0.5cm}
\noindent As a byproduct, we also obtain local H{\"o}lder stability estimates for the Calder\'on problem for radial Schr\"odinger operators on the unit ball. We recall that the initial  potential $q(r)$ is related to $Q(x)$ thanks to the relation  $Q(x) = e^{-2x}q(e^x)$. In particular, we easily see that 
$Q \in L^2((0,\infty), dx)$ if and only if $q \in L^2((0, 1), r^3 dr)$, (resp. $Q \in L^2((0,T), dx)$ if and only if $ q \in L^2((e^{-T}, 1), r^3 dr)$). One easily gets from Theorem \ref{Mainest} :

\vskip .2in

\begin{coro}\label{Calderon1}
Let $q\in L^{2}((0,1), r^3 dr)$ be a fixed central potential and let $\tilde{q}$ be the potential associated with $\tilde{Q}$ given in Theorem \ref{Mainest}. Then,  $\Lambda_q - \Lambda_{\tilde{q}}$ is a bounded operator on $L^2(S^{d-1})$,  and for any fixed $T>0$, there exists a positive constant $C_T$ such that
	\begin{equation}
		|| q- \tilde{q}||_{L^2 ((e^{-T},1), r^3) dr} \leq C_T  \ || \Lambda_q - \Lambda_{\tilde{q}} ||_{B(L^2(S^{d-1}))}^{\theta}\,,
	\end{equation}
	where  $\theta \in (0, \half]$ is the H\"older exponent given in Theorem \ref{Mainest}.
\end{coro}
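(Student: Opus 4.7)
My plan is to deduce Corollary \ref{Calderon1} directly from Theorem \ref{Mainest} by performing two independent translation steps, one on each side of the inequality: converting the $\L$-norm of $Q-\tilde{Q}$ on $(0,T)$ into the weighted $\L$-norm of $q-\tilde{q}$ on the left, and converting the $\ell^{\infty}(\N)$-norm of the Steklov spectral difference into the operator norm of $\Lambda_q - \Lambda_{\tilde{q}}$ on the right.

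For the first translation, I would carry out the change of variable $x = -\log r$ in the integral defining $\|Q-\tilde{Q}\|_{\L(0,T)}^2$. Using the relation $Q(x)=e^{-2x}q(e^{-x})$ together with the Jacobian $-r^3\,dr = e^{-4x}\,dx$, a one-line computation yields
\[
\int_0^T |Q(x)-\tilde{Q}(x)|^2\,dx \;=\; \int_{e^{-T}}^{1} |q(r)-\tilde{q}(r)|^2 \, r^3\,dr,
\]
which is exactly the isometry already mentioned in the paragraph preceding the statement of the corollary, now applied to the difference $Q-\tilde{Q}$ and on the finite interval $(0,T)$.

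For the second translation, I would exploit the fact that both $q$ and $\tilde{q}$ are radial, so that the Dirichlet-to-Neumann maps $\Lambda_q$ and $\Lambda_{\tilde{q}}$ are simultaneously diagonalized in the Hilbert basis $\{Y_k,\,k\geq 0\}$ of $\L(S^{d-1})$, with respective eigenvalues $\sigma_k$ and $\tilde{\sigma}_k$ given by formula \eqref{StekSpecI}. Consequently, their difference $\Lambda_q - \Lambda_{\tilde{q}}$ is diagonal in the same basis with eigenvalues $\sigma_k - \tilde{\sigma}_k$. Invoking Simon's asymptotic $M(-\K^2) = -\K + o(1)$ as $\K \to \infty$, recalled just before \eqref{StabilityStatementI}, the unbounded leading terms $-\K_k$ in $\sigma_k$ and $\tilde{\sigma}_k$ cancel, so that $\{\sigma_k-\tilde{\sigma}_k\}_{k\geq 0}\in\ell^\infty(\N)$; this shows that $\Lambda_q -\Lambda_{\tilde{q}}$ extends to a bounded operator on $\L(S^{d-1})$ with operator norm exactly equal to $\|\sigma_k-\tilde{\sigma}_k\|_{\ell^\infty(\N)}$.

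Combining these two identities with Theorem \ref{Mainest}, applied with $\epsilon := \|\Lambda_q - \Lambda_{\tilde{q}}\|_{B(\L(S^{d-1}))}$, immediately yields the claimed H\"older estimate with the same exponent $\theta$ and a constant $C_T$. I do not anticipate any hard step beyond verifying boundedness of the difference of DN maps, which is the only mildly subtle point: the individual operators $\Lambda_q$ are only bounded from $H^{1/2}(S^{d-1})$ into $H^{-1/2}(S^{d-1})$, and what saves us is precisely the cancellation of the divergent asymptotic tail between $\sigma_k$ and $\tilde{\sigma}_k$, rather than any individual smoothing property.
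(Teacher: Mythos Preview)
Your proposal is correct and follows essentially the same approach as the paper: both arguments identify $\|\Lambda_q-\Lambda_{\tilde q}\|_{B(L^2(S^{d-1}))}$ with $\|\sigma_k-\tilde\sigma_k\|_{\ell^\infty(\N)}$ via the simultaneous diagonalization of the DN maps in the $\{Y_k\}$ basis, then invoke Theorem~\ref{Mainest} and the change of variable $x=-\log r$ to pass between $\|Q-\tilde Q\|_{L^2(0,T)}$ and $\|q-\tilde q\|_{L^2((e^{-T},1),r^3\,dr)}$. Your explicit mention of the cancellation of the leading $-\kappa_k$ terms (via Simon's asymptotic) is the boundedness argument the paper gives in the paragraph preceding \eqref{StabilityStatementI} rather than in the proof proper, but the content is the same.
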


\vskip .1in

\begin{rem}
It is important to  recall that, generically, the problem of the determination of the potential $q$ from the DN map $\Lambda_q$ is highly unstable as shown by Mandache in \cite{Man}. More precisely, in \cite{Man}, Theorem 1 and Corollary 2, Mandache constructs potentials $q$ supported in the unit ball, around which the inverse problem is exponentially unstable. In particular, one cannot get H\"older stability estimates for such potentials. These potentials are not necessarily radial but Mandache states that even radial potentials give counterexamples to stability (see the remark before Lemma 4). We emphasize there is no contradiction with the H\"older stability estimates obtained in Corollary \ref{Calderon1} below. Indeed, the potentials $\tilde{Q}$ appearing in Theorem \ref{Mainest} are very special since they are connected to the particular 
amplitude $\tilde{A}(\alpha)$ given in (\ref{Defperturbation}).
\end{rem}

% -------------   fin des r�sultats principaux ------------

\section{Notation and set-up of the model}\label{setup}
\noindent On the $d$-dimensional closed Euclidean ball
\begin{equation} \label{manifold}
  M = (0,1] \times S^{d-1}\,,
\end{equation} 	
where $d \geq 3$, we consider the Dirichlet problem for the Schr\"odinger operator with potential $q\in L^{\infty}(M)$, given by
\begin{equation} \label{Schrod}
  \left\{ \begin{array}{cc}
	-\triangle u + q\,u= 0, & \textrm{on} \ M\,, \\
	u = \psi \in H^{\half}(\partial M), & \textrm{on} \ \partial M\,.
	\end{array} \right.
\end{equation}
\vskip .1in
\noindent Although we shall shortly make several assumptions about the potential $q$ (including the requirement that it be radial), we begin by recalling a few general facts concerning the Dirichlet problem (\ref{Schrod}) for a general potential $q\in L^{\infty}(M)$. These will lead us to the definition of the Dirichlet-to-Neumann (DN) map and the associated Steklov spectrum, which will be central objects of study in this paper. 
\vskip .1in
\noindent We first recall (see for example Theorem 8.3 in \cite{Gilbarg}) that if $q\in L^{\infty}(M)$ and zero is not a Dirichlet eigenvalue of the above Schr\"odinger operator (which is the case for example if $q\geq 0$), then the Dirichlet problem (\ref{Schrod}) has a unique solution $u\in H^{1}(M)$. 
The Dirichlet-to-Neumann (DN) map $\Lambda_q: H^{1/2}(\partial M) \to H^{-1/2}(\partial M)$ is then defined by
\begin{equation} \label{DN}
  \Lambda_{q} \psi = \left( \partial_\nu u \right)_{|\partial M},
\end{equation}
where $u$ is the unique solution of (\ref{Schrod}) and $\left( \partial_\nu u \right)_{|\partial M}$ is the normal derivative of $u$ with respect to the outer unit normal vector
$\nu$ on $\partial M$. Here $\left( \partial_\nu u \right)_{|\partial M}$ is generally defined in the weak sense as an element of $H^{-1/2}(\partial M)$ by

{$$
  \left\langle \Lambda_{q} \psi | \phi \right \rangle = \int_M (\langle du, dv \rangle + quv) \, dVol\,,
$$
for any $\psi \in H^{1/2}(\partial M)$ and $\phi \in H^{1/2}(\partial M)$ such that $u$ is the unique solution of (\ref{Schrod}) and $v$ is any element of $H^1(M)$ such
that $v_{|\partial M} = \phi$. It is easily checked that if $\psi$ is sufficiently smooth, we have
$$
  \Lambda_{q} \psi = g(\nu, \nabla u)_{|\partial M} = du(\nu)_{|\partial M} = \nu(u)_{|\partial M}\,,
$$
so that the expression in local coordinates for the normal derivative is then given by
\begin{equation} \label{DN-Coord}
\partial_\nu u = \nu^i \partial_i u\,.
\end{equation}
\vskip .1in
\noindent It is well known that the DN map is a pseudo-differential operator of order $1$ which is self-adjoint on $L^2(\partial{M}, dS_g)$ where $dS_g$ denotes the metric induced by the Euclidean metric on the boundary $\partial M=S^{d-1}$.
Therefore, the DN map  $\Lambda_{q}$ has a real and discrete spectrum accumulating at infinity, known as the Steklov spectrum. We shall denote the Steklov eigenvalues (counted with multiplicity) by
\begin{equation} \label{Steklov}
  0 = \sigma_0 < \sigma_1 \leq \sigma_2 \leq \dots \leq \sigma_k \to \infty\,.
\end{equation}
We refer the reader to \cite{GP} and references therein for an excellent survey of the known results on the Steklov spectrum.
\vskip .1in
\noindent As of now and for the remainder of this paper, we shall assume that the potential is radial and write $q=q(r)$, where $r$ denotes the Euclidean distance to the origin. It will be convenient for our subsequent analysis to replace the radial coordinate $r\in (0,1]$ by a new radial coordinate $x\in [0,\infty)$ defined by $x=-\log r$, in which case the the boundary of $M$ now corresponds to $x=0$. The Euclidean metric 
\[
g=dr^2+r^2d\Omega^2\,,
\]
where $d\Omega^2$ denotes the round metric on the unit sphere $S^{d-1}$, then takes the form 
\[
g=f(x)^{4}(dx^2+d\Omega^2)\,,
\]
where $f(x)= \exp(-x/2)$, and the Dirichlet problem (\ref{Schrod}) gets transformed into
\begin{equation} \label{Schrodnew}
  \left\{ \begin{array}{cc}
	  [-\partial_x^2 - \triangle_{S} + Q(x)] v = -\frac{(d-2)^2}{4} v, & \textrm{on} \ M\,, \\
	v = f^{d-2} \psi, & \textrm{on} \ \partial M\,,
	\end{array} \right.
\end{equation}
where $\triangle_{S}$ denotes the Laplacian on the boundary sphere $S^{d-1}$, $v=f^{d-2}u$ and $Q(x):=e^{-2x}q(e^{-x})$.

%\subsection{Separation of variables}\label{Sepvar}
%Our first step in the analysis of (\ref{Schrodnew}) is to exploit the spherical symmetry of the potential $Q$ in order to separate variables and reduce (\ref{Schrodnew}) to an infinite sequence of radial ordinary differential equations. We thus let $\{Y_{k},\, k\geq 0\}$ denote an orthonormal Hilbert basis of $L^{2}(S^{d-1})$ consisting of eigenfunctions of $\triangle_{S}$, 
%\[
%-\triangle_{S}Y_{k}=\alpha_{k}Y_{k}\,,\quad \alpha_{k}=k(k+d-2)\,,
%\]
%where the eigenvalue $\alpha_{k}$ is of multiplicity ${{k+d-1}\choose{d-1}}-{{k+d-3}\choose{d-1}}$, and let
%\[
%\kappa_{k}:=k+\frac{d-2}{2}\,,\quad k\geq0\,.
%\]
%We separate variables by letting
%\begin{equation}\label{sepansatz}
%v=\sum_{k\geq 0} v_{k}Y_{k}\,,
%\end{equation}
%where $v_k=v_k(x)$ depends only on the radial variable $x\in (0,\infty)$, which gives rise upon substitution of (\ref{sepansatz}) into (\ref{Schrodnew}) to the infinite sequence of ordinary differential equations on  $(0,\infty)$ given by
%\begin{equation}
%-v_{k}''+Qv_{k}=-(\alpha_k+\frac{(d-2)^2}{4})v_{k}=-\kappa_k^{2}v_{k}\,.
%\end{equation}
%For our subsequent analysis of the stability problem, it will be very useful to complexify the spectral parameter $\kappa_k$ and consider instead the ordinary differential equation
%\begin{equation}\label{ComplexifiedSL}
%-v''+Qv=zv\, \quad z\in \mathbb{C}\,,
%\end{equation} 
%where $z$ is now to be thought of as a complex spectral parameter.
\subsection{The Weyl-Titchmarsh function}
We now return to the Weyl-Titchmarsh function $M(z)$ which we introduced in (\ref{defWTI}) for the Sturm-Liouville operator 
\begin{equation}
L=-\frac{d^2}{dx^2}+Q\,,
\end{equation}
defined by the left-hand side of (\ref{ComplexifiedSL}). This function will play a central role in our subsequent analysis of the stability problem of the Steklov spectrum for our Schr\"odinger operator. 
\vskip .1in
\noindent We first recall that in order for the Weyl-Titchmarsh function to be well-defined, we need to assume that $L$ is of limit point-type at infinity, meaning that for all $z\in \mathbb{C}\setminus [-\beta, \infty)$ with $\beta >>1$, there exists, up to a zon-zero multiplicative constant, a unique solution $u(x,z)$ of (\ref{ComplexifiedSL}) which is $L^{2}$ at $\infty$. The Weyl-Titchmarsh function $M(z)$ is then defined by
\begin{equation}\label{defWT}
M(z):=\frac{u'(0,z)}{u(0,z)}\, \ {\rm{for \ all  }} \ z\in \mathbb{C}\setminus [-\beta, \infty)\,.
\end{equation}
It is easy to show that the property of $L$ being of limit point-type at infinity will be guaranteed if 
\begin{equation}\label{QL2}
Q\in L^{2}(0,\infty)\,,
\end{equation}
an assumption that we will require $Q$ to satisfy from now onwards.
Indeed, from (1.5) in \cite{Si1}, we know that $L$ will be of limit point-type at infinity if 
\begin{equation}\label{Beta2}
\beta_{2}:=\sup_{y>0}\int_{y}^{y+1}\max\{Q(x),0\}dx<\infty\,.
\end{equation}
But by the Cauchy-Schwarz inequality, we have 
\begin{equation}\label{Beta2Est}
\sup_{y>0}\int_{y}^{y+1}\max\{Q(x),0\}dx \leq \sup_{y>0}\int_{y}^{y+1}|Q(x)|dx\leq ||Q||_{2}\,,
\end{equation}
which shows that (\ref{QL2}) ensures indeed the property that $L$ is of limit point-type at infinity,

\subsection{The amplitude $A$ of a radial potential and the Weyl-Titchmarsh function}
As stated in the Introduction, central to our analysis of the stability problem lies a remarkable representation formula first obtained in \cite{Si1} (under the hypothesis that $Q\in L^{1}(0,\infty)$) for the Weyl-Titchmarsh function the Sturm-Liouville operator $L$ in terms of a the Laplace transform of an \emph{amplitude function} $A$. We shall be using a slightly refined version of this formula which applies to the class of $L^2$ potentials considered in our paper. 
\vskip .1in
\noindent In order to state this formula, we first recall from Theorem 2.1 in \cite{Si1} that if $Q\in L^{1}(0,\infty)$, then the Weyl-Titchmarsh function $M$ may be expressed in the form of the Laplace transform of an amplitude function $A$ by   
\begin{equation}\label{SimonRep} 
  M(-\kappa^2) = -\kappa - \int_0^\infty A(\alpha) e^{-2\kappa\alpha} d\alpha\,,\quad \forall \kappa > \frac{1}{2}||Q||_1\,.
\end{equation}
It was proved in \cite{GS1} (in the remark following (1.17)) that the above equality also holds for all $\kappa \in \mathbb{C}$ such that ${\mbox{Re}}\,\kappa > \frac{1}{2}||Q||_1$. In \cite{AMR} (see Section 5, Algorithm 1, point 2), it is proved that if $\beta_{2}<\infty$, where $\beta_2$ is defined in 
(\ref{Beta2}), then the integral in (\ref{SimonRep}) is absolutely convergent for ${\mbox{Re}}\,\kappa >2\max\{\sqrt{2\beta_2}, e \beta_2\}$. But we saw in (\ref{Beta2Est}) that for square-integrable potentials $Q$, one has the estimate $\beta_{2}\leq ||Q||_{2}<\infty$. It therefore follows from (\ref{SimonRep}) that the Weyl-Titchmarsh function $M$ admits the representation
\begin{equation}\label{SimonRepfinal}
 M(-\kappa^2) = -\kappa - \int_0^\infty A(\alpha) e^{-2\kappa\alpha} d\alpha\,,\quad \forall\, {\mbox{Re}}\, \kappa > 2\max\{\sqrt{2\beta_2}, e\beta_2\}\,.
\end{equation}

\Section{The problem of stability}
\subsection{Statement of the problem and strategy}
The stability problem may be stated as follows in general terms: Given a pair of potentials $Q,{\tilde{Q}}$ such that the difference between their corresponding Steklov spectra is uniformly bounded in absolute value by a small error $\epsilon>0$, i.e

\begin{equation}\label{StabilityStatement}
	||\sigma_k-{\tilde \sigma}_{k}||_{\ell^{\infty}(\N)}=:\epsilon, 
\end{equation} 
what can we say about the difference $Q-{\tilde{Q}}$ of these potentials? 
\noindent As a first step, we can use the expression (\ref{StekSpecI}) of the Steklov spectrum in terms of the Weyl-Titchmarsh function $M$ and Simon's representation formula (\ref{SimonRepfinal}) for $M$ in terms of the Laplace transform of the amplitude function $A$ to reformulate the condition (\ref{StabilityStatement}) in terms of $A$. We have 
\[
|\sigma_k-{\tilde \sigma}_{k}|=|M(-\kappa_k^2)-{\tilde{M}}(-\kappa_k^2)|=|\int_0^\infty (A(\alpha)-{\tilde {A}}(\alpha)) e^{-2\kappa_{k}\alpha} d\alpha |\,,
\]
and making the change of variables $\alpha = -\log t$, our hypothesis (\ref{StabilityStatement}) on the difference of the Steklov spectra takes the form 
\begin{equation}\label{AEstimation}
|\int_{0}^{1} t^{-\delta} \big(A(-\log t)-{\tilde {A}}(-\log t)\big)\,t^{2k+d-3 +\delta}dt| \leq \epsilon\, ,
\end{equation}
where $\delta$ will be a fixed real parameter that will be properly chosen later.  We can see that (\ref{AEstimation}) is effectively a Hausdorff moment problem, and thus one should not expect better stability results than the logarithmic stability estimates of the type obtained in \cite{DKN5} for the Steklov spectra of deformed balls, or in \cite{Ale} and \cite{Nov} for the Steklov spectra of certain Schr\"odinger operators. Nevertheless, as we shall explain in Section \ref{ImprovStab} below, one can approach the stability problem from a different starting point by working directly with perturbations of Simon's amplitude function $A$ by a certain families of exponential series obtained from power series of M\"untz type. We shall see that his leads in turn to H\"older type stability results which are significantly stronger than the logarithmic stability results mentioned earlier, albeit at the cost of restricting the class of potentials to a somewhat small subset of the set of square integrable potentials. More precisely, given the amplitude function $A$ associated to a square-integrable potential function $Q$ by (\ref{SimonRepfinal}), our strategy will consist in defining a perturbed amplitude function $\tilde{A}$ as in (\ref{APert}) and then use an important stability estimate due to Still (Theorem 2 in \cite{St}) to obtain a H\"older estimate on ${\tilde{A}}-A$. This will be the substance of Section \ref{ImprovStab}. The next step, worked out in Section \ref{KS}, will consist in using the powerful results of \cite{KS} to construct an $L^2$ potential $\tilde{Q}$ associated to the perturbed amplitude $\tilde{A}$. Finally, we shall use the methods of boundary control theory of \cite{AMR} to estimate the difference ${\tilde{Q}}-Q$. \\
\par 
\noindent It is noteworthy that with respect to the original Schr\"odinger operator, the type of perturbation being considered for the amplitude function $A$ amounts to the introduction of a finite number of negative eigenvalues (corresponding to the choice of a negative real parameter $\delta$) and  of a countable set of resonances on the negative  real axis, which admit a precise quantitative expression through to the eigenvalues $\mu_k$ of the Laplace Beltrami operator $\Delta_S$ on the boundary sphere.

\subsection{Improved stability by Still's method - first main result}\label{ImprovStab}
Let $Q\in L^{2}(0,\infty)$ be a given fixed potential and let $A$ be the corresponding amplitude function, given by the representation formula (\ref{SimonRep}). 
\vskip .1in
\noindent Having in mind the inequality (\ref{AEstimation}), we set for $k \geq 0$,
\begin{equation}\label{lambdak}
\lambda_{k}:=2k+d-3 +\delta\,,
\end{equation}
where $\delta \geq 3-d$ is an arbitrary fixed real parameter (so that $\lambda_k \geq 0$) and
\begin{equation}\label{defh} 
	h(t)=  t^{-\delta} \left( \tilde{A}(-\log t)-A(-\log t) \right)
\end{equation}
We  define  formally a new amplitude ${\tilde{A}}$ by adding to $A$ a power series 
\begin{equation}\label{APert}
{\tilde{A}}(\alpha)=A(\alpha)+\sum_{k\geq 0}c_{k}e^{-(\lambda_{k}+\delta)\alpha}\,,
\end{equation}
or equivalently
\begin{equation}
h(t) =\sum_{k\geq 0}c_{k}t^{\lambda_{k}}\,.
\end{equation}
  \\ \par 
We assume that the series defining $h(t)$  has a radius of convergence $R>1$, so that $h \in C^0 ([0,1])$. Furthermore we assume that $h$ is such that the estimate (\ref{AEstimation}) holds, that is 
\begin{equation}
 |\int_{0}^1h(t)\,t^{\lambda_{k}}dt|\leq \epsilon \,,\quad \forall k\geq 0\,.
\end{equation}

Our goal for this section is to obtain a good approximation of H\"older or Lipschitz type for $||h||_{{2}}^{2}$, under the above assumptions. We shall do so by using Theorem 2 in the paper \cite{St} by Still and the procedure used in Section 4.3 of \cite{DKN5}. In order to do so we first recall some of the notation used in \cite{DKN5}.
\begin{thm}
Given $\epsilon>0$ and $R>1$ as above and letting $M_0=\max\{2,4(d-3 +\delta)+1\}$, we have, for some universal constant $B>0$, the estimate 
\begin{equation}\label{l2estimateh}
||h||_{2}^{2}\leq B^{2}\epsilon+R^{1-d}\e^{\frac{\log R}{\log (\frac{9M_0}{2})}}\,.
\end{equation}
\end{thm}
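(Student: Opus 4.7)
The plan is to decompose $h$ into a finite M\"untz sum plus a remainder, apply Still's moment stability estimate (Theorem 2 of \cite{St}) to the finite truncation, bound the remainder using the prescribed radius of convergence $R>1$, and then optimize the truncation index $N$ to produce the claimed H\"older--Lipschitz trade-off encoded in the two-term right-hand side.

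For an integer $N\ge 1$ to be chosen later, I split $h = h_N + \rho_N$ with $h_N(t)=\sum_{k=0}^{N-1} c_k\,t^{\lambda_k}$ and $\rho_N(t)=\sum_{k\ge N} c_k\,t^{\lambda_k}$. For every $0\le k\le N-1$ one has
\begin{equation*}
 \int_0^1 h_N(t)\,t^{\lambda_k}\,dt \;=\; \int_0^1 h(t)\,t^{\lambda_k}\,dt \;-\; \int_0^1 \rho_N(t)\,t^{\lambda_k}\,dt,
\end{equation*}
and the first integral on the right is bounded by $\epsilon$ by hypothesis. A Cauchy--Hadamard estimate, exploiting that the power series for $h$ has radius of convergence $R>1$, yields $|c_k|\le C\,r^{-\lambda_k}$ for any $1<r<R$ with a $k$-independent constant $C$; consequently both the tail moments appearing in the last integral and the $L^2$ norm $\|\rho_N\|_2$ decay geometrically in $N$, essentially as $r^{-\lambda_N}$.

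Next, I apply Still's Theorem 2 to the finite M\"untz combination $h_N$ to obtain an inequality of the form
\begin{equation*}
  \|h_N\|_2^2 \;\le\; B_N \;\max_{0\le k\le N-1}\left|\int_0^1 h_N(t)\,t^{\lambda_k}\,dt\right|,
\end{equation*}
where $B_N$ is a product of ratios $(\lambda_j+\lambda_k+1)/|\lambda_j-\lambda_k|$ coming from Still's analysis of the generalized Vandermonde moment matrix. Because our exponents $\lambda_k=2k+d-3+\delta$ form an arithmetic progression with common difference $2$, this product can be estimated explicitly and bounded by $(9M_0/2)^N$, where $M_0=\max\{2,\,4(d-3+\delta)+1\}$ measures the relative size of $\lambda_0$ with respect to the common difference; this is the arithmetic computation already carried out in Section~4.3 of \cite{DKN5}, which I will simply invoke. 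Combined with the moment-transfer step, this produces $\|h_N\|_2^2 \le (9M_0/2)^N\bigl(\epsilon+C\,r^{-\lambda_N}\bigr)$.

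To conclude I use $\|h\|_2^2 \le 2\|h_N\|_2^2 + 2\|\rho_N\|_2^2$ and balance $N$ so that the geometric growth $(9M_0/2)^N\epsilon$ in the finite-part bound matches the geometric decay $r^{-2\lambda_N}$ of the tail; letting $r\nearrow R$, the resulting exponent works out to $\log R/\log(9M_0/2)$, with the prefactor $R^{1-d}$ arising from the value of $\lambda_0$ when $k=0$. When $R\ge 9M_0/2$ this exponent is at least $1$, and a universal low-$N$ application of Still yields the linear term $B^2\epsilon$ with a universal constant $B$. The main obstacle in this plan is the careful bookkeeping of Still's constant: obtaining the sharp factor $9M_0/2$ rather than some uncontrolled asymptotic requires fully exploiting the arithmetic-progression structure of the $\lambda_k$'s, and it is precisely this computation that forces the particular value of $M_0$ appearing in the statement.
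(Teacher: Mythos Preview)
Your plan has the two key ingredients swapped, and as written it misstates what Still's theorem actually provides. Theorem~2 of \cite{St} is \emph{not} a moment--stability estimate: it is an approximation result, asserting that if $h$ is a M\"untz series with radius of convergence $R>1$ then the error of best uniform approximation by $\mathcal{M}(\Lambda_n)$ satisfies $E_\infty(h,\Lambda_n)\le C\,R^{-\lambda_{n+1}}$. The inequality you write as ``Still's Theorem~2'', namely $\|h_N\|_2^2\le B_N\max_k\big|\int_0^1 h_N\,t^{\lambda_k}\,dt\big|$, is instead the Gram--Schmidt moment estimate (4.57)/(4.72) of \cite{DKN5}; the product of ratios $(\lambda_j+\lambda_k+1)/|\lambda_j-\lambda_k|$ you mention is the expression for the coefficients $C_{mj}$ of the orthonormal M\"untz system, and the bound $B_N\lesssim (9M_0/2)^{2N}$ (note: squared, since $g(n)\sim(9M_0/2)^n$ controls $\|\cdot\|_2$, not $\|\cdot\|_2^2$) is the content of the computation you cite from \cite{DKN5}.

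The paper's proof uses a different decomposition from yours: it writes $h=\pi_n h+(h-\pi_n h)$ with $\pi_n$ the \emph{orthogonal projection} onto $\mathcal{M}(\Lambda_n)$, not the series truncation $h_N$. This is cleaner because the moments $\int_0^1(\pi_n h)\,t^{\lambda_k}\,dt$ coincide exactly with those of $h$ for $0\le k\le n$, so they are bounded directly by $\epsilon$ with no tail correction; the Gram--Schmidt estimate from \cite{DKN5} then gives $\|\pi_n h\|_2^2\le B^2\epsilon^2 g(n)^2$. The complementary piece $\|h-\pi_n h\|_2=E_2(h,\Lambda_n)\le E_\infty(h,\Lambda_n)$ is precisely what Still's theorem controls. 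Finally, the paper does not ``balance'' the two terms: it fixes $n=n(\epsilon)$ by $g(n)\le 1/\sqrt{\epsilon}$, forcing the projection term to be exactly $B^2\epsilon$, and then reads off $R^{-\lambda_{n(\epsilon)+1}}\sim R^{1-d-\delta}\epsilon^{\log R/\log(9M_0/2)}$ from the resulting size of $n(\epsilon)$. In your truncation scheme, Still's theorem would in fact play no role at all (the tail $\rho_N$ is handled by Cauchy--Hadamard), so with the attributions corrected your outline could be pushed through, but it is a genuinely different route and carries the extra nuisance of the tail-corrected moments of $h_N$.
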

We note that the estimate (\ref{l2estimateh}) is generally a H\"older type estimate for $||h||_2^{2}$, but that if $R>\frac{9M_0}{2}$, this estimate is Lipschitz. 
\vskip .1in 
\begin{proof} \noindent Given a sequence $\Lambda_{\infty}:=(\lambda_{n})_{n\geq 0}$ of integers such that $0\leq \lambda_{0}<\lambda_{1}<\cdots $ and $\lambda_{k}\to \infty$ as $k\to \infty$, we define for fixed $ n\geq 1$ the finite sequence
\begin{equation}
 \Lambda_n := 0 \leq \lambda_0<\lambda_1 < ... <\lambda_n\,,
\end{equation}
giving rise to the vector space $ \mathcal{M}(\Lambda)$ of "M\"untz polynomials of degree $\lambda_n$":
\begin{equation}
 \mathcal{M}(\Lambda_{n}) = \{ P : \ P(t) = \sum_{k=0}^n a_k \ t^{\lambda_k} \}\,.
\end{equation}
Recall that according to the M\"untz-Sz\' asz Theorem, if $\Lambda_{\infty}$ is a sequence of positive real numbers as above, then span $\{ t^{\lambda_0}, t^{\lambda_1}, ... \}$  is dense in $L^2([0,1])$ if and only if
\begin{equation}\label{condM\"untz}
\sum_{k=1}^{\infty} \frac {1}{\lambda_k} = \infty.
\end{equation}
\vskip .1 in 
\noindent We remark that if $\lambda_0=0$, the denseness of the M\"untz polynomials in $C^0([0,1])$ in the sup norm is also characterized by (\ref{condM\"untz}).
\vskip .1in
\noindent Given now a function $f$ in $C^0([0,1])$ or in $ L^2([0,1])$, the error of approximation of $f$ with respect to $\mathcal{M}(\Lambda_{n})$ is defined by
\begin{equation}
 E_{k}(f, \Lambda_{n}):= \inf_{P \in \mathcal{M}(\Lambda_{n})} \ || f-P||_k \,,
\end{equation}
where $k=2$ or $k=\infty$ depending on whether $f \in C^0([0,1])$ or $ f \in L^2([0,1])$. For our application, we have $\lambda_{k}:=2k+d-3+\delta$, giving $\lambda_{k+1}-\lambda_{k}=2>0$, so by Theorem 2 of \cite{St}, we know that 
\begin{equation}\label{StillEstimate}
E_{\infty}(h,\Lambda_{n})\leq CR^{-\lambda_{n+1}}\,,
\end{equation}
for some positive constant $C$.
\vskip .1in
\noindent We have, denoting by $\pi_n$ the orthogonal projection onto the subspace $\mathcal{M}(\Lambda_{n})$,
\begin{equation}
||h||_{2}^{2}=||\pi_{n}(h)||_{2}^{2}+||h-\pi_{n}(h)||_{2}^{2}\,.
\end{equation}
\vskip .1in
\noindent Our next step is to combine the estimate (4.57) from \cite{DKN5} Section 4.3 of and the estimate (\ref{StillEstimate}) to obtain an estimate for the norm of $\pi_{n}(h)$. In order to do so, we use the Gram-Schmidt process to obtain polynomials $(L_m(t))$ with $L_0(t)=1$, and for $m \geq 1$,
\begin{equation}\label{Lm}
L_m(t) = \sum_{j=0}^m C_{mj} t^{\lambda_j},
\end{equation}
where we have set
\begin{equation}\label{cm}
C_{mj}= \sqrt{2\lambda_m +1} \  \frac{ \prod_{r=0}^{m-1} (\lambda_j + \lambda_r +1)}{ \prod_{r=0, r \not= j}^{m} (\lambda_j - \lambda_r)}.
\end{equation}
The family $(L_m (t))$ defines an orthonormal Hilbert basis of $L^2([0,1])$. 
%{\clr  For instance, if $\lambda_k = k$, the polynomials $(L_m(t))$ are the Legendre polynomials and the coefficients $C_{mj}$ are given by the well-known expressions
%\begin{equation}\label{coefLegendre}
%C_{mj}^0 : =  \sqrt{2m+1} \ (-1)^{m-j} \ \frac{(m+j)!}{(m-j)! \ j!^2}\,.
%\end{equation}
%}
We may now recall the estimate (4.57) from \cite{DKN5}, 
\begin{equation}
||\pi_{n}(h)||_{2}^{2}\leq \e^2 \ \sum_{k=0}^n \left( \sum_{p=0}^k |C_{kp}| \right)^2\,,
\end{equation}
which gives immediately
\begin{equation}\label{l2estimate}
||h||_{2}^{2}\leq \e^2 \ \sum_{k=0}^n \left( \sum_{p=0}^k |C_{kp}| \right)^2 + CR^{-\lambda_{n+1}}\,,
\end{equation}
using (\ref{StillEstimate}) and the inequality
\[
||h-\pi_{n}(h)||_{2}^{2}=E_{2}(h,\Lambda_n)\leq E_{\infty}(h,\Lambda_n)\,.
\]
Now, according to the estimate (4.72) of \cite{DKN5}, we have 
\begin{equation}\label{estimproj}
  ||\pi_{n} h||_2^2 \leq B^2 \e^2 \ g(n)^2\,,
\end{equation}
where $B$ is a positive constant and $ g :[0, + \infty[$ is a monotone increasing function defined for $t \in [0,+\infty)$ by
\begin{equation}\label{Defg}
 g(t) = \frac{3}{2}   \frac {  1   } {\sqrt{  \left(\frac{9M_0}{2}\right)^{2}-1}} \ \sqrt{2t+1} \ \left(\frac{9M_0}{2}\right)^{t+1}\quad , \quad M_0=\max\{2,4(d-3+\delta)+1\}\,.
\end{equation}
\vskip .1in
\noindent
Now, repeating the steps that lead from the inequalities (4.72) to (4.73) in \cite{DKN5}, we choose $n$ as a function of $\epsilon $ so as to control the norm of the projection $||\pi_{n} h||_2^2$ of $h$ and thus set ${\displaystyle{n(\epsilon) := [ \ (g^{-1} ( \frac{1}{\sqrt{\e}}))]}}$ where square brackets denote the integral part function. Since $g$ is a monotone increasing function, we have 
\begin{equation}\label{gnEstimate}
g(n(\e)) \leq \frac{1}{\sqrt{\e}}\,,
\end{equation}
so using (\ref{estimproj}) we obtain immediately:
\begin{equation}\label{stability1}
 ||\pi_{n(\e)} h||_2^2 \leq B^2 \e.
\end{equation}
\vskip .1in
\noindent Our next task is now to estimate the size of $n(\epsilon)$ relative to $\epsilon$ so as to obtain the H\"older estimate we seek for $||h||_{2}^{2}$. From (\ref{Defg}), we obtain that 
\[
g(t)\sim (t+1)\log(\frac{9M_0}{2})\,,
\]
as $t\to \infty$, which combined with (\ref{gnEstimate}) leads to 
\begin{equation}\label{nepsilon}
n(\epsilon)=\frac{\log(\frac{1}{\sqrt{\epsilon}})}{\log(\frac{9M_0}{2})}\,.
\end{equation}
Plugging this into (\ref{l2estimate}) gives
\begin{equation}\label{l2estimate1}
||h||_{2}^{2}\leq B^{2}\epsilon+ CR^{-\lambda_{n(\epsilon)+1}}\,.
\end{equation}

Now, using the expression $\lambda_{k}=2k+d-3+\delta$ and (\ref{nepsilon}), we have
\begin{equation}\label{preestimate}
R^{-\lambda_{n(\epsilon)+1}}=R^{1-d-\delta}R^{-2n(\epsilon)}\sim R^{1-d-\delta}R^{-\frac{2 \log \frac{1}{\sqrt{\e}}}{\log (\frac{9M_0}{2})}}\sim  R^{1-d-\delta}e^{\frac{\log \e}{\log(\frac{9M_0}{2})}\log R}\sim R^{1-d-\delta}\e^{\frac{\log R}{\log( \frac{9M_0}{2})}}\,.
\end{equation}
Substituting (\ref{preestimate}) into (\ref{l2estimate1}), we obtain
\begin{equation}\label{l2estimate2}
||h||_{2}^{2}\leq B^{2}\epsilon+R^{1-d-\delta}\e^{\frac{\log R}{\log( \frac{9M_0}{2})}}\,.
\end{equation}
\vskip .1in
\noindent In terms of the amplitude function $A$ in the variable $\alpha \in (0,\infty)$, using the relation  
\[
||h||_2^{2}=\int_{0}^{1} t^{-2\delta} ({A}(-\log t)-{\tilde{A}}(-\log t))^{2}\,dt\,,
\]
we obtain 
\begin{equation}\label{l2estimateA}
\int_{0}^{\infty}e^{(2\delta-1)\alpha}({A}(\alpha)-{\tilde{A}}(\alpha))^{2}\ d\alpha \leq B^{2}\epsilon+R^{1-d-\delta}\e^{\frac{\log R}{\log (\frac{9M_0}{2})}}\,.
\end{equation}

\end{proof}

\Section{From the perturbed amplitude $\tilde A$ to a potential ${\tilde {Q}} \in L^{2}(0,\infty)$ }\label{KS}

\subsection{Statement of the second main result}
Our objective for Section \ref{KS} is to establish a result on the existence of square-integrable potentials ${\tilde {Q}}$ associated to the perturbed amplitudes $\tilde A$ as defined in (\ref{APert}). As we shall see, this will require a few additional hypotheses on the perturbation of the amplitude function $A$ given by (\ref{APert}) and thus on the perturbation of the starting potential $Q$. 

\vskip .1in
\noindent

We set for $k \geq 0$,
\begin{equation}\label{muk}
	\mu_k := \lambda_k +\delta = 2k+d-3 +2\delta
\end{equation}
so that
\begin{equation}\label{defperturbation0}
	{\tilde{A}}(\alpha)=A(\alpha)+\sum_{k\geq 0}c_{k}e^{-\mu_k \alpha}\,,\quad \alpha >0\,.
\end{equation}

\vskip .1in
\noindent 
Thus, in dimension $d$ greater than 3, $\mu_k$ may be a negative real number, so we split the series in  (\ref{defperturbation0}) as
\begin{equation}\label{defperturbation1}
	{\tilde{A}}(\alpha)=A(\alpha)+\sum_{k= 0}^{N-1} c_{k}e^{-\mu_k \alpha} + \sum_{k\geq N}c_{k}e^{-\mu_k \alpha} \,,\quad \alpha >0\,,
\end{equation}
such that for $k=0, ..., N-1, \ \mu_k < 0$ and for $k \geq N+1, \ \mu_k \geq 0$, with the convention that the first sum in (\ref{defperturbation1}) does not appear if all the $\mu_k$'s are positive (i.e if $N=0$).

\vskip .1in
\noindent 
Now, it is convenient to  rewrite  (\ref{defperturbation1}) as:
\begin{equation}\label{defperturbation2}
	{\tilde{A}}(\alpha)-A(\alpha) = \sum_{k= 0}^{N-1} 2c_{k}\ \sinh (|\mu_k|\alpha) + \sum_{k\geq 0}c_{k}e^{-|\mu_k|\ \alpha}.
\end{equation}

\vskip .1in
\noindent 
In this  form, we note that the perturbation ${\tilde{A}}(\alpha)-A(\alpha)$ has exactly the same expression as the amplitude given in \cite{GS1}, Eq. (11.9), modulo the fact we consider a convergent series instead a finite sum, and that the coefficients $|\mu_k|$ for $k=0, ..., N-1$ appear simultaneously in the first finite sum and also in the convergent series.

%\vskip .1in
%\noindent 
%{\clr Notez bien que dans (\cite{GS1}, Eqs (11.6)-(11.9)), les auteurs supposent que $\kappa_j \not= \gamma_l$}

\vskip .1in
\noindent 
As we shall see in the next section, the first finite sum in the right-hand side of (\ref{defperturbation2}) corresponds to the introduction of negative eigenvalues {$- \frac{\mu_k^2}{4}$} for $k=0, ..., N-1$,  whereas the second sum corresponds to the introduction of  real resonances $- \frac{|\mu_k|}{2}$ for $k \geq 0$.

%\vskip .1in
%\noindent 
%{\clr Lorsqu'il y a des $\mu_k$ n�gatifs, notez bien la subtilit�  dans l'introduction de ces r�sonances, il y a une valeur absolue qui devrait d'embl�e vous surprendre. Ce n'est pas une typo. Voir les commentaires dans la section suivante.}

\vskip .1in
\noindent 
We state this result in the form of a theorem, that will be proved in Section \ref{KillipSimon} below by applying Theorem 1.2 from the paper \cite{KS} by Killip and Simon:
\begin{thm}\label{KSApp}
Let $Q\in L^{2}(0,\infty)$ be a square-integrable potential with amplitude function $A$, let $\{c_{k},\,k\geq 0\}$ be a sequence of real numbers such that 
\begin{itemize}
\item{i)} For all $k \geq 0,\  c_k \leq 0$.
\item{ii)} the power series $\sum_{k\geq 0}c_{k} t^{\lambda_k}$ has a radius of convergence $R>1$.
\end{itemize}
Then the function ${\tilde{A}}$ defined by 

\begin{equation}\label{defperturbation}
{\tilde{A}}(\alpha)=A(\alpha)+\sum_{k\geq 0}c_{k}e^{-\mu_{k}\alpha}\,,\quad \alpha >0\,,
\end{equation}
is the amplitude function of a potential ${\tilde{Q}}\in L^{2}(0,\infty)$.
\end{thm}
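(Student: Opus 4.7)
The plan is to deduce the $L^{2}$-integrability of $\tilde Q$ from the Killip-Simon characterization of $L^{2}$ half-line Schr\"odinger potentials in terms of their spectral data (Theorem 1.2 of \cite{KS}). The bridge is provided by translating the perturbation of the amplitude function in (\ref{defperturbation}) into a perturbation of the Weyl-Titchmarsh function via Simon's representation, and then into a perturbation of the spectral measure via the Herglotz representation. Explicitly, writing $\tilde M(-\kappa^{2}) - M(-\kappa^{2}) = -\sum_{k\geq 0} \frac{c_{k}}{2\kappa + \mu_{k}}$ (obtained by Laplace-transforming $\tilde A - A$ term by term, which is justified provided the series converges), the sign assumption $c_{k}\leq 0$ will ensure that the formal sum on the right corresponds to a \emph{genuine} non-negative spectral measure rather than just a meromorphic expression.

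First I would give each perturbation term its spectral interpretation, following \cite{GS1}. The reformulation (\ref{defperturbation2}) matches exactly the form of \cite{GS1}, Eq.~(11.9): the finite $\sinh$-sum adjoins the $N$ negative simple eigenvalues $-\mu_{k}^{2}/4$ for $k=0,\dots,N-1$ to the spectrum of $L$, while each exponential $c_{k}e^{-|\mu_{k}|\alpha}$ with $k \geq N$ produces a real resonance at $-|\mu_{k}|/2$ and, via the decomposition (\ref{defperturbation2}), the eigenvalues themselves also appear as resonances on the same sheet. The hypothesis $R > 1$ yields $|c_{k}|^{1/\lambda_{k}} \to 1/R < 1$, from which uniform convergence of $\sum c_{k}e^{-\mu_{k}\alpha}$ on every interval $[\alpha_{0},\infty)$ with $\alpha_{0}>0$ follows, so that $\tilde A$ is a well-defined locally integrable function on $(0,\infty)$ and the term-by-term Laplace transform above is legitimate in a suitable half-plane.

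The final step would be to verify the two Killip-Simon conditions for the spectral measure $d\tilde\rho$ of $\tilde Q$ obtained from $\tilde M$: finiteness of the weighted sum over discrete eigenvalues, which is trivial here since only $N$ eigenvalues are added; and the quasi-Szeg\H o / Case sum-rule condition on the absolutely continuous component. Since the a.c.\ part of $d\tilde\rho$ differs from $d\rho$ by a multiplicative factor built from Blaschke-type products encoding the countable collection of resonances $\{-|\mu_{k}|/2\}$, the check reduces to summability of a logarithmic integral whose integrand is controlled, resonance by resonance, by $|c_{k}|$; the geometric decay of $|c_{k}|$ coming from $R>1$ makes this sum absolutely convergent. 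Once the hypotheses of Theorem 1.2 of \cite{KS} are satisfied, we recover a potential $\tilde Q \in L^{2}(0,\infty)$ whose Weyl-Titchmarsh function coincides with $\tilde M$, and uniqueness of the amplitude in Simon's representation then forces its amplitude to be $\tilde A$. The main obstacle I foresee is the careful verification of the quasi-Szeg\H o integrability: one has to control how the accumulating resonances $-|\mu_{k}|/2 \to -\infty$ deform the spectral density on $[0,\infty)$, and the sign hypothesis $c_{k}\leq 0$ combined with the radius-of-convergence hypothesis $R>1$ is precisely what prevents cancellations and keeps the deformation within the $L^{2}$ class.
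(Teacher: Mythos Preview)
Your overall strategy matches the paper's: translate the perturbation of $A$ into a perturbation of $M$, read off the modified spectral measure $d\tilde\rho$, and apply Theorem~1.2 of \cite{KS}. However, there is a genuine gap. Theorem~1.2 of Killip--Simon does not have ``two conditions'': besides positivity and the Weyl support condition, there are \emph{three} analytic conditions --- Lieb--Thirring, quasi-Szeg\H{o}, and a \emph{Normalization} condition involving the short-range Hardy--Littlewood maximal function $M_{s}$ of the measure $d\nu$ built from $\mathrm{Im}\,M(k^{2}+i0)-k$ on $(1,\infty)$. You have omitted the Normalization condition entirely, and in the paper this is actually the most delicate check: one must show
\[
\int_{1}^{\infty}\log\!\Big[1+\Big(\tfrac{(M_{s}\tilde\nu)(k)}{k}\Big)^{2}\Big]\,k^{2}\,dk<\infty,
\]
which the paper does by computing $|\tilde\nu|([x-L,x+L])-|\nu|([x-L,x+L])$ explicitly, obtaining an $\mathcal{O}(L/k)$ correction uniformly in $L\in(0,1]$ (this uses only $\sum|c_{k}|<\infty$), so that $(M_{s}\tilde\nu)(k)=(M_{s}\nu)(k)+\mathcal{O}(1/k)$ and the finiteness is inherited from $Q\in L^{2}$.

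Your sketch of the quasi-Szeg\H{o} verification is also not quite on track. The a.c.\ part of $d\tilde\rho$ does not differ from $d\rho$ by a multiplicative Blaschke-type factor; the perturbation is \emph{additive} in $d\tilde\rho/d\rho_{0}$:
\[
\frac{d\tilde\rho}{d\rho_{0}}(E)=\frac{1}{|\psi(0,\sqrt{E})|^{2}}-2\sum_{n\geq 0}\frac{c_{n}}{4E+\mu_{n}^{2}},
\]
and the paper proceeds by expanding this as $1+b/E+\mathcal{O}(1/E^{2})$ (using $\sum|c_{n}|\mu_{n}^{2}<\infty$, which comes from $R>1$), so that the quasi-Szeg\H{o} integrand is $\mathcal{O}(E^{-2})$. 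Your ``resonance-by-resonance'' Blaschke picture would require a separate justification that is not the one the spectral measure formula actually gives you.
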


\subsection{The spectral measure}
In order to proceed with the proof of Theorem \ref{KSApp}, we need to first compute the difference of the spectral measures of $Q$ and  ${\tilde {Q}}$ in terms of the data contained in the perturbed amplitude ${\tilde {A}}$. In the first instance we will work heuristically so as to set the stage for the mathematical objects at play.
\vskip .1in
\noindent Let us first recall from \cite{GS2} that the Weyl-Titchmarsh function $M$ is a function of Herglotz type, meaning that for all 
$z\in \mathbb{C}$ such that ${\mbox{Im}}\,z>0$ we have ${\mbox{Im}}\,M(z)>0$. The Weyl-Titchmarsh function belongs to the subclass of functions of Herglotz type admitting the representation formula

\[
M(z)=c+ dz+ \int_{\mathbb{R}}\big(\frac{1}{\lambda-z}- \frac{1}{1+\lambda^{2}}\big)d\rho(\lambda)\,,
\]
where $c= Re (M(i))$,  $d = \lim_{y \to + \infty} \frac{M(iy)}{y} = 0$, (see \cite{Si1}, Corollary 4.2), and  $d\rho(\lambda)$ is the (positive) spectral measure associated to (\ref{ComplexifiedSL}). The measure $d\rho(\lambda)$ can be constructed by taking the following weak limit (in the distributional sense)
\begin{equation}\label{weaklim}
d\rho(E)=\plim[w]_{\epsilon \downarrow 0}\frac{1}{\pi}{\mbox{Im}\,(M}(E+i\epsilon))\,dE\,.
\end{equation}
We denote by $\tilde{M}(-\kappa^2)$ the putative Weyl-Titchmarsh function associated with the amplitude $\tilde{A}(\alpha)$, so thanks to (\ref{SimonRepfinal}), we have  (formally and for suitable $\kappa$),

\begin{equation}\label{DiffWT}
{\tilde{M}}(-\kappa^2) - M(-\kappa^2)= - \int_{0}^{\infty} \left( {\tilde{A}}(\alpha)-A(\alpha) \right) \ e^{-2\kappa \alpha} \ d\alpha .
\end{equation}

\vskip .1in
\noindent
Now, using (\ref{defperturbation2}), we easily write  the difference $\tilde{M}(-\kappa^2) - M(-\kappa^2)$ as 
\begin{equation}
{\tilde{M}}(-\kappa^2) - M(-\kappa^2)= - 2 \sum_{k= 0}^{N-1} c_{k}\  \frac{|\mu_k|}{4\kappa^2 -\mu_k^2} \ -\ \sum_{k\geq 0}\frac{c_k}{2\kappa +|\mu_{k}|}\,.
\end{equation}

\noindent
We note that the above series is indeed convergent since by hypothesis the power series $\sum_{k\geq 0}c_{k}t^{\lambda_k}$ 
has radius of convergence $R>1$ so that in particular $\sum_{k\geq 0}|c_{k}|<\infty$.

\vskip .1in
\noindent
Now, using (\ref{weaklim}), we are able to define the difference of the spectral measures $d\tilde{\rho}(E) - d\rho (E)$ (for more details, we refer the reader to (\cite{GS1}, Eqs. (11.7)-(11.9), p.637).

\vskip .1in
\noindent
For $E \geq 0$,
\begin{equation}\label{Diffmeas1}
	d\tilde{\rho}(E) = d\rho (E) -\frac{2}{\pi} \sum_{k \geq 0}  c_k \ \frac{\sqrt{E}}{4E+\mu_k^2} \ dE,
\end{equation}
and for $E<0$,
\begin{equation}\label{Diffmeas2}
	d\tilde{\rho}(E) = d\rho (E) - \half \sum_{k=0}^{N-1} c_k\ |\mu_k| \  \delta (\cdot + \frac{\mu_k^2}{4} ) \ dE ,
\end{equation}
where $\delta(\cdot- a)$ stands for the usual delta distribution, centered at the point $a$.

\vskip .1in
\noindent
\vskip .1in
\noindent
As was explained in (\cite{GS1}, Section 11),  this corresponds to the introduction of a finite number of negative eigenvalues $-\frac{\mu_k^2}{4}$ for $k=0, ..., N-1$, and  of real resonances $-\frac{|\mu_k|}{2}$ for $k \geq 0$.

 \vskip .1in
 \noindent
 Now, we can explain precisely our strategy : in the next section we show, using the Killip-Simon conditions, that under the hypotheses of Theorem \ref{KSApp}, 
 there exists a potential ${\tilde{Q}}\in L^{2}(0,\infty)$ 
 associated to the above spectral measure $d\tilde{\rho}(E)$, allowing us to define the associated Weyl-Titchmarsh function 
 $\tilde{M} (z)$ for $z \in \C \backslash [-\tilde{\beta}, +\infty[$ for $\tilde{\beta} >>1$. The amplitude function associated to $\tilde{M} (z)$ is automatically given by $\tilde{A}(\alpha)$ thanks to  
 the uniqueness of the inverse Laplace transform and analytic continuation.

 \subsection{The Killip-Simon conditions and the proof of Theorem \ref{KSApp}}\label{KillipSimon}
As earlier, we now apply Theorem 1.2 from the paper \cite{KS} by Killip and Simon in order to prove Theorem \ref{KSApp} establishing existence of a potential ${\tilde{Q}}\in L^{2}(0,\infty)$ associated to the perturbed amplitude $\tilde A$. Besides the positivity of the perturbed measure $d{\tilde{\rho}}$, there are four conditions stated in the theorem of Killip and Simon that we need to verify on $d{\tilde{\rho}}$ in order for their theorem to apply. In what follows, we state these conditions and show they are satisfied under the hypotheses of Theorem \ref{KSApp}.
\begin{itemize}
\item {Positivity of the measure $d{\tilde{\rho}}(E)$:}

\noindent This follows immediately on account of (\ref{Diffmeas1}), (\ref{Diffmeas2}), and the hypothesis $c_{k}\leq 0\,, \forall\, k\geq 0$.

\item{Weyl condition:}
\noindent The Weyl condition on $d{\tilde{\rho}}$ states that the support of $d{\tilde{\rho}}$ should decompose as 
\[
\supp d{\tilde{\rho}} =[0,\infty)\cup \{{\tilde{E}}_{j}\}_{j=1}^{N}\,, \quad \mbox{with}\,\, {\tilde{E}}_{1}<{\tilde{E}}_{2}<\cdots<0\,, \quad \mbox{with}\,\, {\tilde{E}}_{j} \to 0 \,\, \mbox{if}\, N=\infty\,.
\]
Again this is immediate from the fact that the measure $d\rho$ associated to $Q$ satisfies the Weyl condition, from the identities (\ref{Diffmeas1}), (\ref{Diffmeas2}) 
and from the fact that our perturbation is only adding real resonances and a finite number of negative eigenvalues eigenvalues $- \frac{\mu_k�}{4}$ for $k=0, ..., N-1$.

\item{Normalization:} We need to verify that $d{\tilde{\rho}}$ satisfies a certain estimate whose formulation requires the introduction of the Hardy-Littlewood maximal function of a measure. 
The argument is more elaborate and we present it in greater detail. Following \cite{KS}, we introduce a measure $d\nu$ on $(1, \infty)$ parametrized by $k$, with $E=k^2$,
\[
\frac{d\nu}{dk}= {\mbox{Im}}\big(M(k^2+i0)\big)-k\,,
\]
which gives on account of (\ref{DiffWT}) with $\kappa=-ik$,
\[
\frac{d{\tilde{\nu}}}{dk}=\frac{d\nu}{dk}-2\sum_{n\geq 0}c_{n}\frac{k}{4k^{2}+\mu_{n}^{2}}\,.
\]
We then define the Hardy-Littlewood maximal function $M_{s}$ of the measure $\nu$ by
\[
(M_{s}\nu)(x)=\sup_{0<L\leq 1}\frac{1}{2L}|\nu|([x-L,x+L])\,.
\]
and compute $|{\tilde{\nu}}|([x-L,x+L])$:
\[
|{\tilde{\nu}}|([x-L,x+L])=|\nu|([x-L.x+L])-2\sum_{n\geq 0}c_{n}\int_{x-L}^{x+L}\frac{k}{4k^{2}+\mu_{n}^{2}}\,dk\,.
\]
Since
\[
\int_{x-L}^{x+L}\frac{k}{4k^{2}+\mu_{n}^{2}}\,dk=\frac{1}{8}\log \frac{(x+L)^{2}+\mu_{n}^{2}}{(x-L)^{2}+\mu_{n}^{2}}\,,
\]
we obtain 
\[
|{\tilde{\nu}}|([x-L,x+L])=|\nu|([x-L.x+L])-2\sum_{n\geq 0}\frac{c_{n}}{8}\log \frac{(x+L)^{2}+\mu_{n}^{2}}{(x-L)^{2}+\mu_{n}^{2}}\,.
\]
Taking $x=k>>1$, the normalization condition we need to verify is 

\begin{equation}\label{norm}
\int_1^{+\infty}\log\bigg[1+\bigg(\frac{(M_{s}{\tilde{\nu}})(k)}{k}\bigg)^{2}\bigg]k^{2}\,dk<\infty\,.
\end{equation}

We have
\[
\log \frac{(k+L)^{2}+\mu_{n}^{2}}{(k-L)^{2}+\mu_{n}^{2}}=\log\big( 1+\frac{4kL}{(k-L)^{2}+\mu_{n}^{2}}\big)=\log(1+{\mathcal{O}}(\frac{L}{k}))\, ,
\]
uniformly in $n$ and $L\in (0,1]$. But by condition ii) in Theorem \ref{KSApp}, we know that the series $\sum_{n\geq 0}|c_{n}|$ is convergent, so it follows that 
\[
|{\tilde{\nu}}|([x-L,x+L])=|\nu|([x-L, x+L])+{\mathcal{O}}(\frac{L}{k})\,,\quad k\to \infty\,,
\]
uniformly in $L\in (0,1]$. Therefore we have 
\begin{equation}\label{HLAsympt}
(M_{s}{\tilde{\nu}})(k)=(M_{s}\nu)(k)+{\mathcal{O}}(\frac{1}{k})\,.
\end{equation}
Now, using the inequality
\[
\log\big(1+(x+y)^{2}\big)\leq C\big(x^{2}+\log(1+y^2)\big)\,,
\]
for some constant $C>0$, we obtain using (\ref{HLAsympt}) the estimate 
\begin{align*}
\log\bigg[1+\bigg(\frac{(M_{s}{\tilde{\nu}})(k)}{k}\bigg)^{2}\bigg]&=\log\bigg[1+\bigg(\frac{(M_{s}{{\nu}})(k)+{\mathcal{O}}(\frac{1}{k})}{k}\bigg)^{2}\bigg] \\
&\leq C \bigg[ {\mathcal{O}}(\frac{1}{k^{4}})+\log\bigg[1+\bigg(\frac{(M_{s}{{\nu}})(k)}{k}\bigg)^{2}\bigg]\bigg]\,,
\end{align*}
which implies the normalization condition (\ref{norm}) after integration over $\mathbb{R}$, using the fact that $\nu$ is associated to a potential $Q\in L^{2}(0,\infty)$.

\item{Lieb-Thirring condition:} The Lieb-Thirring condition $\sum_{j}|{\tilde{E}}_{j}|^{3/2}<\infty$ is trivially satisfied here as was the case for the Weyl condition 
since all we are doing is to add a finite number of negative eigenvalues.

\item{Quasi-Szeg\"o condition:} This condition states that if $d\rho_{0}$ is the free spectral measure, 
that is the spectral measure associated to the zero potential $Q\equiv 0$, then 
\begin{equation}\label{QuasiSzego}
\int_{0}^{\infty}\log\big[\frac{1}{4}\frac{d{\tilde{\rho}}}{d\rho_{0}}+\frac{1}{2}+\frac{1}{4}\frac{d\rho_{0}}{d{\tilde{\rho}}}\big]\sqrt{E}\,dE<\infty\,.
\end{equation}
Again, the verification of this condition for the perturbed measure $d{\tilde{\rho}}$ is more elaborate and we therefore present it in greater detail. 
The spectral measure $d\rho_{0}$ has the expression
\[
d\rho_{0}(E)=\frac{1}{\pi}\,\chi_{(0,\infty)}(E)\sqrt{E}\,dE\,,
\]
so that using (\ref{Diffmeas1}), we have
\begin{equation}\label{Diffmeasmod}
d{\tilde{\rho}}(E)-d{{\rho_{0}}}(E)=d{{\rho}}(E)-d{{\rho_{0}}}(E)-\frac{2}{\pi}\,\sum_{n\geq 0}c_{n}\frac{\sqrt{E}}{4E+\mu_{n}^{2}}\,dE\,.
\end{equation}
We now use (\ref{weaklim}) to express the spectral measure $d\rho$ in terms of the Jost function $\psi(x,\kappa)$ associated to the potential $Q$, 
where we let $E=-\kappa^{2}$. Recall that the Weyl-Titchmarsh function $M$ is given by
\begin{equation}\label{MJost}
M(-\kappa^{2})=\frac{\psi'(0,\kappa)}{\psi(0,\kappa)}\,,
\end{equation}
so that using (\ref{MJost}), we obtain
\begin{align}
\mbox{Im}\,M(-\kappa^{2})&=\frac{1}{2i}\,\bigg(M(-\kappa^{2})-\overline{{M(-\kappa^{2})}}\bigg)=\frac{1}{2i}\,
\bigg(\frac{\psi'(0,\kappa)}{\psi(0,\kappa)}-\frac{{\overline{\psi'(0,\kappa)}}}{{\overline{\psi(0,\kappa)}}}\bigg) \nonumber \\
&=\frac{1}{2i}\,\frac{W(\overline{\psi},\psi)}{|\psi |^{2}}(0,\kappa)\,,\label{ImM}
\end{align}
where $W$ denotes the Wronskian. But $\psi$ and $\overline{\psi}$ are solutions of the same linear second-order 
ODE since the potential $Q$ and the spectral parameter $E=-\kappa^{2}$ are both real. It follows that the Wronskian $W(\overline{\psi},\psi)$ 
is independent of $x$. Now since $\psi$ is the Jost function, we have, in terms of the parameter $k=i\kappa$ introduced in the normalization condition, the asymptotics 
\[
\psi(x,\kappa)\simeq e^{-\kappa x}=e^{ikx}\,, \quad \mbox{for}\,\,x\to \infty\,,
\]
so that 
\[
W(\overline{\psi},\psi)=2ik\,.
\]
Substituting the latter into (\ref{ImM}), we obtain 
\[
\mbox{Im}\,M(-\kappa^{2})=\frac{i\kappa}{|\psi(0,\kappa)|^{2}}\,,
\]
which plugged into in (\ref{weaklim}) gives for $E>0$
\begin{equation}\label{drhoE}
d\rho(E)=\frac{1}{\pi}\,\frac{\sqrt{E}}{|\psi(0,\sqrt{E})|^{2}}\,dE\,.
\end{equation}
But (\ref{Diffmeasmod}) gives
\[
\frac{d{\tilde{\rho}}}{d\rho_{0}}(E)=\frac{d{{\rho}}}{d\rho_{0}}(E)-2\sum_{n\geq 0}c_{n}\frac{1}{4E+\mu_{n}^{2}}\,,
\]
which combined with (\ref{drhoE}) implies
\begin{equation}\label{derivrhotilde}
\frac{d{\tilde{\rho}}}{d\rho_{0}}(E)=\frac{1}{|\psi(0,\sqrt{E})|^{2}}-2\sum_{n\geq 0}\frac{c_{n}}{4E+\mu_{n}^{2}}\,.
\end{equation}
\end{itemize}
We now analyze the asymptotics of $\frac{d{\tilde{\rho}}}{d\rho_{0}}(E)$ in the limit $E\to \infty$. On the one hand we have 
\[
\frac{1}{4E+\mu_{n}^{2}}=\frac{1}{4E}\big(1+\mathcal{O}(\frac{{\mu_n}^{2}}{E})\big)\,,
\]
and on the other hand we know that since the radius of convergence $R$ of the series $\sum_{n\geq 0}c_{n}t^{\lambda}_{n}$ satisfies $R>1$ and since $\lambda_{n}=\mathcal{O}(n)$, 
the series $\sum_{n\geq 0}|c_{n}|\mu_{n}^{2}$ is convergent. The identity (\ref{derivrhotilde}) now implies
\[
\frac{d{\tilde{\rho}}}{d\rho_{0}}(E)=\frac{1}{|\psi(0,\sqrt{E})|^{2}}-\big(\frac{1}{2} \sum_{n\geq 0}c_{n}\big)\frac{1}{E}+\mathcal{O}(\frac{1}{E^{2}})\,.
\]
Using the asymptotics on the modulus of the Jost function given by
\[
|\psi(0,\sqrt{E})|=1+\frac{a}{E}+\mathcal{O}{\frac{1}{E^{2}}}\,,
\]
where $a$ is a real constant, we obtain that 
\begin{equation}\label{derivrhotildeasympt}
\frac{d{\tilde{\rho}}}{d\rho_{0}}(E)=1+\frac{b}{E}+\mathcal{O}(\frac{1}{E^{2}})\,,
\end{equation}
for some real constant $b$, which implies in turn that 
\begin{equation}\label{derivrhoasympt}
\frac{d\rho_{0}}{d{\tilde{\rho}}}=1-\frac{b}{E}+\mathcal{O}(\frac{1}{E^{2}})\,.
\end{equation}
Using (\ref{derivrhotildeasympt}) and (\ref{derivrhoasympt}), we obtain
\begin{equation}\label{Integrand}
\frac{1}{4}\frac{d{\tilde{\rho}}}{d\rho_{0}}+\frac{1}{2}+\frac{1}{4}\frac{d\rho_{0}}{d{\tilde{\rho}}}=1+\mathcal{O}(\frac{1}{E^{2}})\,,
\end{equation}
which implies that the Quasi-Szeg\"o condition (\ref{QuasiSzego}) is satisfied since (\ref{Integrand}) implies that 
\[
\log\big[\frac{1}{4}\frac{d{\tilde{\rho}}}{d\rho_{0}}+\frac{1}{2}+\frac{1}{4}\frac{d\rho_{0}}{d{\tilde{\rho}}}\big]=\mathcal{O}(\frac{1}{E^{2}})\,.
\]

\Section{A few examples : Bargmann potentials}

In this section, we consider perturbations of the potential $Q(x)=0$, (with the associated amplitude function $A(\alpha)=0$), and we give examples of amplitudes $\tilde{A}(\alpha)$ for which we can calculate explicitly the associated  potentials. 
These examples are borrowed from (\cite{GS1}, section 11).

\subsection{First example}
We define for $\alpha \geq 0$,
\begin{equation}
 \tilde{A}(\alpha) = 2 (\gamma^2 - \beta^2) \ e^{-2\gamma \alpha},
\end{equation}
where $\beta>0$ and $\gamma \in [0, \beta[$. Of course, it corresponds to a M\"untz series of the type (\ref{defperturbation0}) (with a single term) 
with $c_0 = 2 (\gamma^2 - \beta^2)<0$ and $\mu_0 = 2\gamma \geq 0$.  

\vskip .1in
\noindent
Thus, we take $\delta = \gamma + \frac{3-d}{2} \geq 3-d$ since $d \geq 3$ by hypothesis. It is known that
\begin{equation}
 \tilde{Q}(x)= -8 \beta^2 \left( \frac{\beta-\gamma}{\beta+\gamma} \right) \ \frac{e^{-2\beta x}}{(1 + \frac{\beta-\gamma}{\beta+\gamma} \  e^{-2\beta x})^2}
\end{equation}
The associated Jost function is given in the variable $\kappa = -ik$ by
\begin{equation}
 \psi(0, \kappa) = \frac{\kappa+\gamma}{\kappa + \beta},
\end{equation}
(see \cite{GS1}, case 2, p. 636) and is holomorphic in $\mbox{Re} \ \kappa > -\beta$.  The unique root of the Jost function is given by $\kappa=-\gamma$ which is a real resonance.

\subsection{Second example}
For $\alpha>0$, we define  the amplitude
\begin{equation}
 \tilde{A}(\alpha) = - \frac{2c_1}{\kappa_1} \ \sinh (2\kappa_1 \alpha),
\end{equation}
where $c_1>0$ is a normalization constant and $\kappa_1>0$. It corresponds to a M\"untz series with two terms and with two $\mu_k$ of different sign. The associated potential is given by
\begin{equation}
 \tilde{Q}(x)=  -2 \frac{d^2}{dx^2} \ \log \left( 1 + \frac{c_1}{\kappa_1^2} \ \int_0^x \sinh^2 (\kappa_1 y ) \ dy \right),
\end{equation}
ahe Jost function has the form in the $\kappa$ variable
\begin{equation}
	\psi(0, \kappa) = \frac{\kappa-\kappa_1}{\kappa + \kappa_1},
\end{equation}
(see \cite{GS1}, case 1, p. 635). We note that the Jost function is vanishing at $\kappa = \kappa_1$ wich corresponds to the single negative eigenvalue $-\kappa_1^2$.

\vskip .1in
\noindent

%{\clr Je pense qu'il est important de d�tailler un peu plus cette section en parlant des potentiels de Bargmann g�n�ralis�s introuduits dans le papier Gestezy-Simon, voir eq. (11.9). Dans notre cas, on a une s�rie convergente, alors qu'eux n'ont que des sommes finies.}
 
\Section{Gel'fand-Levitan equations and local stability estimates}

\subsection{Proof of Theorem \ref{Mainest}.}

\vskip .1in
\noindent

In this section, we deduce from the estimates for the difference of the amplitudes $A -\tilde{A}$ obtained in Section 4 a set of new H\"older {\it local} stability estimates for the difference of the associated potentials $Q - \tilde{Q}$. By local stability, we mean that we are able to control the norm   $|| Q - \tilde{Q} ||_{L^2(0,T)}$ with respect to $\epsilon$, if the Steklov spectra of the underlying Schr\"odinger operators are close up to $\epsilon$ as in (\ref{StabilityStatementI}), $T$ being any fixed positive parameter. 

\vskip .1in
\noindent

More precisely, we assume here that the potential $Q \in L^2(0,\infty)$ (and thus its associated amplitude $A$), is {\it{fixed}} and that $\tilde{Q} \in L^2(0,\infty)$  belongs to the infinite dimensional class, denoted $\mathcal{C}_Q$,  defined above, that is we assume that the associated amplitude to  $\tilde{Q}$  has the form
\begin{equation}
	{\tilde{A}}(\alpha)=A(\alpha)+\sum_{k\geq 0}c_{k}e^{-\mu_k \alpha}\,,\quad \alpha >0\,,
\end{equation} 
where $\mu_k = 2k+d-3 +2\delta$ and $\delta \geq 3-d$. Moreover, we assume that $ c_k \leq 0$ for all $k \geq 0$ and the power series $\sum_{k\geq 0}c_{k} t^{\lambda_k}$ has a radius of convergence $R>1$.

\vskip .1in
\noindent
To obtain these local stability estimates, we shall make intensive use of the local version of the classical Gel'fand-Levitan equations, (see for instance  (\cite{ABM}, Eq. (2.24)) which we recall here. For $0 \leq x \leq t \leq T$, we consider the integral equation 
\begin{equation} \label{LGL}
V(x,t) + \int_x^T K(t,s) V(x,s) \ ds = -K(x,t)	,
\end{equation}
where the integral kernel $K(t,s)$ is given by
\begin{equation}\label{noyau}
	K(t,s) = p(2T-t-s) - p(|t-s|),
\end{equation}
and 
\begin{equation}\label{defp}
	p(t) = - \half \int_0^{\frac{t}{2}} A(\alpha) \ d \alpha .
\end{equation}

\vskip .1in
\noindent
These integral equations are uniquely solvable  for all $ x \in (0,T)$ and we can recover the underlying potential using the relation:
\begin{equation}\label{solutionQ}
	Q(T-x) = -2 \frac{d}{dx} V(x,x)\,.
\end{equation}
An easy calculation shows that
\begin{eqnarray}\label{derivee}
	\frac{d}{dx} \left( V(x,x) \right) &=& p(2T-x) V(x,x) + 2 p'(2T-2x) - \int_x^T 	\left( p(2T-x-s) -p(s-x) \right)\  \frac{\partial V}{\partial x} (x,s) \ ds \nonumber \\
& & - \int_x^T 	\left( p'(2T-x-s) -p'(s-x) \right) \ V(x,s) \ ds.
\end{eqnarray}	
	
\vskip .1in
\noindent
Let us begin with an elementary result:

\begin{lemma}\label{estdiffp}
	Under the hypotheses of Theorem \ref{Mainest}, there exists a constant $C_T$ depending only on $T$ such that
	\begin{equation}
		|| p - \tilde{p} ||_{ ( C^0(0,2T), || \cdot ||_{\infty} )} \leq C_T \ f(\epsilon),
	\end{equation}
where
\begin{equation}
f(\epsilon) = \left( B^2 \epsilon + R^{1-d} \ \e^{\frac{\log R}{\log (\frac{9M_0}{2})}} \right)^{\half}
\end{equation}
\end{lemma}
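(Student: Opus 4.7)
The plan is to exploit the fact that $p$ is essentially an antiderivative of $A$, which transforms the weighted $L^2$-estimate on $A-\tilde A$ obtained in (\ref{l2estimateA}) into a uniform (sup-norm) estimate on $p-\tilde p$ over the bounded interval $[0,2T]$, at the cost of only a constant depending on $T$.

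Concretely, from the definition (\ref{defp}) applied to both $A$ and $\tilde A$, I would write, for $t \in [0,2T]$,
\begin{equation*}
p(t) - \tilde p(t) \;=\; -\frac{1}{2}\int_0^{t/2} \bigl(A(\alpha) - \tilde A(\alpha)\bigr)\,d\alpha.
\end{equation*}
Since the integration domain is contained in $[0,T]$, I would then apply the Cauchy--Schwarz inequality with the exponential weight $e^{(2\delta-1)\alpha}$ appearing on the left-hand side of (\ref{l2estimateA}):
\begin{equation*}
|p(t) - \tilde p(t)| \;\leq\; \frac{1}{2}\left(\int_0^T e^{-(2\delta-1)\alpha}\,d\alpha\right)^{1/2}\!\left(\int_0^T e^{(2\delta-1)\alpha}\bigl|A(\alpha) - \tilde A(\alpha)\bigr|^2\,d\alpha\right)^{1/2}.
\end{equation*}
The first factor on the right is a finite constant $C_T'$ depending only on $T$ and $\delta$, whether $2\delta-1$ is positive, negative or zero. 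The second factor is controlled via (\ref{l2estimateA}), extended trivially from $[0,T]$ to $[0,\infty)$:
\begin{equation*}
\int_0^T e^{(2\delta-1)\alpha}|A-\tilde A|^2\,d\alpha \;\leq\; B^2\epsilon + R^{1-d-\delta}\,\epsilon^{\frac{\log R}{\log (9M_0/2)}}.
\end{equation*}
Since $R$ and $\delta$ are fixed, the factor $R^{-\delta}$ may be absorbed into an overall constant, so the right-hand side is bounded above by a constant multiple of $f(\epsilon)^2$. Taking square roots and then the supremum over $t \in [0,2T]$ yields the desired bound
\begin{equation*}
\|p - \tilde p\|_{C^0([0,2T])} \leq C_T\,f(\epsilon).
\end{equation*}

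No real obstacle arises here; the proof is a short consequence of Cauchy--Schwarz combined with the weighted $L^2$-estimate of Section 4. The only subtlety worth noting is the choice to apply Cauchy--Schwarz \emph{with} the weight $e^{(2\delta-1)\alpha}$ (rather than in plain $L^2$), which is what allows us to directly invoke (\ref{l2estimateA}) without losing any information in the exponent of $\epsilon$; this preserves the H\"older exponent $\theta$ ultimately appearing in Theorem \ref{Mainest}.
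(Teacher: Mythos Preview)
Your proof is correct and follows exactly the approach the paper intends: the paper's own proof is the one-line remark that the lemma is ``an immediate application of (\ref{l2estimateA}) and the Cauchy--Schwarz inequality,'' and you have simply written out those details, including the correct handling of the weight $e^{(2\delta-1)\alpha}$ and the absorption of the fixed factor $R^{-\delta}$ into $C_T$.
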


\begin{proof}
	This is  an immediate application of (\ref{l2estimateA}) and the Cauchy-Schwartz inequality.
\end{proof}

\vskip .1in
\noindent
Now, let us introduce some notation to simplify the presentation below. In what follows, the parameters $x$ and $T$ are assumed to be fixed and $t$ is a variable lying in the interval $[x,T]$. We denote by $K$ the integral operator on $L^2(x,T)$ with kernel $K(t,s)$, 
\begin{equation}
	Kf(t) = \int_x^T K(t,s) \ f(s)  \ ds,
\end{equation}
and set
\begin{equation}
	d(t) := p(t-x)-p(2T-x-t).
\end{equation}
Thus, the solution  $V(x,.)$ of the integral equation (\ref{LGL}) can be written as

\begin{equation} \label{V}
	V:= V(x,.) = (I+K)^{-1} d.
\end{equation}
Using (\ref{V}) and the usual resolvent identity, one obtains
\begin{equation}\label{resolvent}
	\tilde{V}-V = (I+\tilde{K})^{-1} \left( \tilde{d} - d  + (K - \tilde{K}) (I+K)^{-1}d \right)
\end{equation}
By Lemma \ref{estdiffp}, one has the uniform estimate  for $t,s \in [0,T]$,
\begin{equation}
 | \tilde{d}(t) - d(t) | \leq C_T \ f(\epsilon) \ \  ,\ \ 	| \tilde{K}(t,s) - K(t,s) | \leq C_T \ f(\epsilon),
\end{equation}
thus using Schur's lemma, one gets
\begin{equation}\label{schur}
	|| \tilde{K} - K || \leq C_T \ f(\epsilon),
\end{equation}
in the sense of the operator norm  on $L^2(x,T)$. As a consequence for $\epsilon>0$ sufficiently small, the operator $I+ (I+K)^{-1} (\tilde{K} -K)$ is invertible, and using again the resolvent identity, one obtains easily
\begin{equation}
(I+\tilde{K})^{-1} = \left( I+ (I+K)^{-1} (\tilde{K} -K) \right)^{-1} (I+K)^{-1}.
\end{equation}
It follows that, for $\epsilon <<1$, the operator norm of $(I+\tilde{K})^{-1}$ is uniformly bounded:
\begin{equation} \label{estop}
	|| (I+\tilde{K})^{-1} || \leq  2  \ || (I+K)^{-1}||.
\end{equation}
Thus, thanks to (\ref{resolvent}) ,  (\ref{schur}) and (\ref{estop}), one has:
\begin{equation}\label{estV}
	|| \tilde{V}- V ||_{L^2 (x,T)} \leq C_T \ f(\epsilon)
\end{equation}

\vskip .1in
\noindent
In the same way, differentiating the integral equation (\ref{LGL}) with respect to $x$, one obtains:
\begin{equation}
	\frac{\partial V}{\partial x} (x,t)+ \int_x^T K(t,s) 	\frac{\partial V}{\partial x}(x,s) = -p'(t-x) +p'(2T-x-t) + 
	K(t,x) V(x,x),
\end{equation}
and by the same argument, we get immediately
\begin{equation}\label{estderV}
||  \frac{\partial \tilde{V}}{\partial x} - \frac{\partial V}{\partial x}          ||_{L^2 (x,T)} \leq C_T \  f(\epsilon).
\end{equation}	
Finally, using (\ref{derivee}), (\ref{estV}) and (\ref{estderV}), mimicking the above arguments, one has for all $0 \leq x \leq T$,
\begin{equation}
	||  \frac{d}{dx} \left( \tilde{V}(x,x) \right) - \frac{d}{dx} \left( V(x,x) \right)      ||_{L^2 (x,T)} \leq C_T \  f(\epsilon).
\end{equation}
Then taking $x=0$ and using (\ref{solutionQ}), we see that
\begin{equation}
||  \tilde{Q}- Q ||_{L^2 (x,T)} \leq C_T \  f(\epsilon),
\end{equation}
and the proof of Theorem \ref{Mainest} is complete.

\subsection{Proof of Corollary \ref{Calderon1}.}

First, it is easy to see that  
\begin{equation}\label{bounded}
\Lambda_q - \Lambda_{\tilde{q}} \in B(L^2(S^{d-1}))\,.
\end{equation}
Indeed, the restriction of the DN map onto the orthogonal projection of the restriction to $S^{d-1}$ of the space of homogeneous harmonic polynomials of degree $k$ in $\mathbb{R}^d$ satisfies
$$
\Lambda_q^k \psi_k = \sigma_k \psi_k .
$$
Thus,
\begin{eqnarray*}
|| (\Lambda_q - \Lambda_{\tilde{q}}) \psi ||_{L^2}^2 &=& || \sum_k (\sigma_k -\tilde{\sigma_k}) \psi_k Y_k ||_{L^2}^2 \\
                                                     &=& \sum_k |\sigma_k -\tilde{\sigma_k}| |\psi_k|^2 \\
                                                     & \leq & || \sigma_k -\tilde{\sigma_k}||_{l^{\infty}(\N)} \  || \psi||_{L^2}.
\end{eqnarray*}
So, we deduce that $||\Lambda_q - \Lambda_{\tilde{q}}||_{B(L^2(S^{d-1}))} = || \sigma_k -\tilde{\sigma_k}||_{l^{\infty}(\N)}<\infty$ and thus that (\ref{bounded}) holds. It follows that the local H\"older stability estimates obtained in Theorem \ref{Mainest} imply that for any $T>0$, there exists a positive constant $C_T$ such that 
$$
|| Q-\tilde{Q}||_{L^2 (0,T)} \leq C_T \ ||\Lambda_q - \Lambda_{\tilde{q}}||_{B(L^2(S^{d-1}))}^{\theta},
$$
or equivalently,
$$
|| q-\tilde{q}||_{L^2 ((e^{-T}, 1), r^3 dr)} \leq C_T \ ||\Lambda_q - \Lambda_{\tilde{q}}||_{B(L^2(S^{d-1}))}^{\theta}.
$$

\vspace{0.8cm}
\noindent \textbf{Acknowledgements}: The authors would like to warmly thank the anonymous referees for their valuable comments and suggestions.

%%%%%%%%%%%%%%%%%%%%%%%%%%%%%%%%%%%%%%%%%%%%%%%%%%%%%%%%%%%%%%%%%%%%%%%%%%%
%BIBLIOGRAPHY
%%%%%%%%%%%%%%%%%%%%%%%%%%%%%%%%%%%%%%%%%%%%%%%%%%%%%%%%%%%%%%%%%%%%%%%%%%%

\end{document}